\numberwithin{equation}{section}
\theoremstyle{plain}
\newtheorem{thm}{Theorem}[section]
\newtheorem{lem}[thm]{Lemma}
\newtheorem{cor}[thm]{Corollary}
\theoremstyle{definition}
\newtheorem{defn}{Definition}[section]
\theoremstyle{remark}
\newtheorem*{rem}{Remark}
\newcommand{\bb}[1]{{\mathbb{#1}}}
\begin{document}
\title{Existence and uniqueness of Remotely Almost Periodic solutions of differential equations and applications}

\author[Diego Jaure, Christopher Maul\'en]
{Diego Jaure, Christopher Maul\'en}  % in alphabetical order

\address{Diego Jaure \newline
Facultad de Ingenier\'ia, Universidad de la Rep\'ublica}
\email{jaure.diego@gmail.com}

\address{Christopher Maul\'en \newline
Departamento de Matem\'aticas, Universidad de Chile}
\email{christoph.maulen.math@gmail.com}

%\thanks{Partially supported by FONDECYT 1120709}
%\keywords{A}

\begin{abstract}
We establish existence and uniqueness of remotely almost periodic (RAP) solutions for nonlinear ordinary differential systems
\[
x' = A(t)x + f(t,x) + g_{\nu}(t,x).
\]
Assuming that the linear equation $x' = A(t)x$ admits an exponential dichotomy and that the associated Green kernel is exponentially bi-remotely almost periodic, we derive sufficient conditions guaranteeing a unique RAP solution of the perturbed system for $\nu$ in a suitable range. As an application, we obtain RAP solutions for a nonautonomous Brusselator model.
\end{abstract}

\maketitle

% \begin{enumerate}
% \item Preliminar similar a 1.
% \item Funciones Remotamente casi periodicas y ecuaciones diferenciales ordinarias.
% \begin{enumerate}
% \item Teorema de existencia y unicidad para los sistemas perturbados
% \begin{eqnarray*}
% x'	&=&	A(t)x(t)+f(t,x)\\
% y'	&=&	A(t)y(t)+f(t,y)+g_{\nu}(t,y)
% \end{eqnarray*} 
% \item Teorema de existencía y unicidad para los sistemas perturbados
% \begin{eqnarray*}
% x'	&=&	A_{0}(t)x(t)+f(t,x)\\
% y'	&=&	A_{\nu}(t)x(t)+f(t,y)+g_{\nu}(t,y)
% \end{eqnarray*}
% \end{enumerate}
% \item Aplicacion:
% \subitem Principio del Promedio para ecuaciones remotamente casi periodicas.
% \end{enumerate}

%\input{sections/introduction_and_preliminaries.tex}
\section{Introduction and preliminaries}

The notion of an almost periodic function was introduced by a Danish mathematician H. Bohr around 1925 and later generalized by many others. 
Let $I={\mathbb R}$ or $I=[0,\infty),$ let $X$ be a complex Banach space, and let $f : I \rightarrow X$ be continuous. Given $\epsilon>0,$ we call $\tau>0$ an $\epsilon$-period for $f(\cdot)$  if and only if\index{$\epsilon$-period}
\begin{align*}
\| f(t+\tau)-f(t) \| \leq \epsilon,\quad t\in I.
\end{align*}
By $\vartheta(f,\epsilon)$ we denote the set of all $\epsilon$-periods for $f(\cdot).$ We say that $f(\cdot)$ is almost periodic if and only if for each $\epsilon>0$ the set $\vartheta(f,\epsilon)$ is relatively dense in $[0,\infty),$ which means that
there exists $l>0$ such that any subinterval of $[0,\infty)$ of length $l$ meets $\vartheta(f,\epsilon)$. For further information about almost periodic functions and their applications, see the research monographs \cite{diagana, fink, gaston, cheban, 10r,CMPS2, nova-mono,188, 30,zhang-mono}. 

It is well known that Sarason defined the notion of a scalar-valued remotely almost periodic function in \cite{sarason} (1984). The class of vector-valued remotely almost periodic functions defined on ${\mathbb R}^{n}$ was introduced by Yang and Zhang in \cite{fyang1} (2011), where the authors have provided several applications in the study of existence and uniqueness of
remotely almost periodic solutions for parabolic
boundary value problems. 
In \cite[Proposition 2.4-Proposition 2.6]{fyang1}, the authors have examined the existence and uniqueness of remotely almost periodic solutions of multi-dimensional heat equations, while the main results of the third section of this paper are concerned with the existence and uniqueness of remotely almost periodic type solutions of the certain types of parabolic
boundary value problems (see also \cite{fyang} and \cite{fyang2}, where the authors have investigated almost periodic type solutions and slowly oscillating type solutions for various classes of
parabolic Cauchy inverse problems). 
Concerning applications of remotely almost periodic functions, mention should be made of the research articles \cite{szhang} by Zhang and Piao, where the authors have investigated the time remotely almost periodic viscosity solutions
of Hamilton-Jacobi equations, and \cite{zhangremote1} by Zhang and Jiang, where the authors have investigated remotely almost periodic solutions for a class systems of differential
equations with piecewise constant argument; see the monograph by Wiener \cite{wiener} and the research articles \cite{kapital2, kapital3, CMPS1,mkdv0,P_DEPCAG,TPCK1,VP,kapital6} for more details about the subject.

The problem of finding almost periodic solutions for certain classes of ordinary differential equations has been treated by many authors (see e.g., \cite{8r}, \cite{10r}, \cite{11r}, \cite{41r}). In the existing literature, we can find numerous results about the existence, uniqueness, stability, applications in biology, etc.  To our best knowledge, nobody has applied such functions in the theory of ordinary differential equations (with the exception of paper \cite{zhangremote1} by Zhang and Liang).

When a process is described by differential equations, we are passing from a real object (process) to an idealized model. Every mathematical idealization implies, in a certain way, to omit small quantities.
Therefore, the way in which distortion is introduced in the phenomenon ends up being very important, thus arriving at the mathematical problem where the solutions of the differential equation depend on small parameters. To simplify, we are only considering problems with only one problem involved.

There is a variety of mathematical problems which made a wide use of a small parameter, probably the first to describe this type of problem was J. H. Poincar\'e (1854-1912) as part of his researches in celestial mechanics \cite{poincare}. For example, the earth-moon-spaceship problem (the small parameter in this context is generally the relation between two masses).

In \cite{hale}, Hale considers the following periodical systems which contain a small parameter $\nu$ such as
\begin{eqnarray*}
x'=Ax+\nu g(t,x),\ \ \ x'=A(t)x+\nu g(t,x),\ \ \ x'=A(t)x+g(t,x,\nu).
\end{eqnarray*}
With some enough conditions we get the existence of $\omega$-periodic solutions of these systems. However, given that there are many systems which have different parameters it is not expected to get a periodic solution, this is how almost periodic solutions come naturally. In 1974 Fink \cite{fink} studied the disturbed system
\begin{eqnarray*}
x'=A(t)x+\nu g(t,x,\nu).
\end{eqnarray*}
And under some sufficient conditions the existence and uniqueness of almost periodic solutions. Based on works from Xia et al. \cite{xia}, the existence of almost periodic solution is obtained for the following systems
\begin{eqnarray*}
x'&=&A(t)x+f(t,x)+\nu g(t,x,\nu)\\
x'&=&A(t,\nu)x+f(t,x)+\nu g(t,x,\nu).
\end{eqnarray*}
Motivated by these works, we will study the existence of remotely almost periodic solutions, as these functions are more realistic and more general. Moreover, they allow us to perturb almost periodic functions in a broader manner.

In this paper we consider the following systems
\begin{eqnarray}
\frac{dz}{dt} & = & A\left(t\right)z,\label{eq:lineal}\\
\frac{dy}{dt} & = & A\left(t\right)y+f\left(t,y\right),\label{eq:lineal_nohomo}\\
\frac{dx}{dt} & = & A\left(t\right)x+f\left(t,x\right)+g_{\nu}\left(t,x\right).\label{eq:perturbado}
\end{eqnarray}
Where $A \in\mathcal{RAP}\left(\mathbb{R},\mathcal{M}_{n\times n}(\mathbb{R})\right)$, $g_{\nu}\left(\,\cdot\,,x\right)=g\left(\,\cdot\,,x,\nu\right)$ is remotely almost periodic uniformly with respect to $x\in \mathbb{R}^n$, $\nu\in \mathbb{R}$  is a small real parameter, and moreover, $g_{0}(\,\cdot\,,\,\cdot\,)=g(\,\cdot\,,\,\cdot\,,0)\equiv 0$.

Consider the following systems of differential equations:
\begin{align}\label{(1.1)}
\frac{dx}{dt}=A(t)x(t)
\end{align}
and
\begin{align}\label{(1.2)}
\frac{dx}{dt}=A(t)x(t)+f(t),
\end{align}				
where $A(t)$ is a complex-valued matrix of format $n\times n$ for all $t\in {\mathbb R}.$

We use the standard notation throughout the paper.
By $BUC({\mathbb R } : {\mathbb C }^{n})$ we denote the Banach space of bounded and uniformly continuous functions $f : {\mathbb R } \rightarrow {\mathbb C }^{n},$ 
equipped with the sup-norm $\| \cdot \|_{\infty};$ let $\| \cdot \|$ be a fixed norm in ${\mathbb C}^{n}.$ We set ${\mathbb N}_{n}:=\{1,\cdot \cdot \cdot, n\}.$

To better understand the space of remotely almost periodic functions, denoted by $RAP({\mathbb R } : {\mathbb C }^{n})$, we will recall the notion of a slowly oscilating function (the corresponding space is denoted by $SO({\mathbb R } : {\mathbb C }^{n})$ henceforth):
A function $f\in BUC({\mathbb R } : {\mathbb C }^{n})$ is called slowly oscillating if and only if for every $a\in {\mathbb R}$ we have that
$$
\lim_{|t|\rightarrow +\infty}\| f(t+a)-f(t)\|=0.
$$

Now we recall the notion of a remotely almost periodic function:

\begin{defn}\label{rap}
A function $f\in BUC({\mathbb R } : {\mathbb C }^{n})$ is called remotely almost periodic if and only if $\epsilon>0$ we have that the set
$$
T(f,\epsilon):=\Biggl\{ \tau \in {\mathbb R}: \limsup_{|t|\rightarrow +\infty}\| f(t+\tau)-f(t)\|<\epsilon\Biggr\}
$$
is relatively dense in $ {\mathbb R}.$ 
\end{defn}

Any number $\tau \in T(f,\epsilon)$ is called an $\epsilon$-remote-translation vector of $f(\cdot).$ We know that $RAP({\mathbb R } : {\mathbb C }^{n})$ is a closed subspace of 
$BUC({\mathbb R } : {\mathbb C }^{n})$ and therefore the Banach space itself. 		
If the functions $F_{1}(\cdot), \cdot \cdot \cdot, F_{k}(\cdot)$ are remotely almost periodic ($k\in {\mathbb N}$), then for each $\epsilon>0$ the set of their common
$\epsilon$-remote-translation vectors s is relatively dense in ${\mathbb R};$ see e.g., \cite[Proposition 2.3]{fyang1}.

The following lemma proven in \cite{coppel_book}, allows us to determine under what conditions the exponential dichotomy of a linear differential system is preserved under perturbations.

\begin{lem}[Roughness]\label{perturbacion_mantiene_dicotomia}
If the system \eqref{eq:lineal} has an exponential dichotomy in $\mathbb{R}$
with positive constants $K,\,\alpha$ and the projection $P$. If $\delta=\sup_{t\in\mathbb{R}}\left\Vert B\left(t\right)\right\Vert <\frac{\alpha}{4K^{2}}$, then the disturbed system 
\begin{eqnarray}
\frac{dx}{dt}=A\left(t\right)x+B\left(t\right)x \label{A+B}
\end{eqnarray}
has an $(\alpha-2K\delta,\frac{5K^{2}}{2},Q)$-exponential dichotomy, this is: 
\begin{eqnarray*}
\left\Vert \Psi\left(t\right)Q\Psi^{-1}\left(s\right)\right\Vert  & \leq & \frac{5K^{2}}{2}e^{-\left(\alpha-2K\delta\right)\left(t-s\right)},\, t\geq s\\
\left\Vert \Psi\left(t\right)\left(I-Q\right)\Psi^{-1}\left(s\right)\right\Vert  & \leq & \frac{5K^{2}}{2}e^{-\left(\alpha-2K\delta\right)\left(s-t\right)},\, s> t,
\end{eqnarray*}
where $\Psi\left(t\right)$ is the fundamental matrix of the disturbed system \eqref{A+B} such that $\Psi\left(0\right)=I$ and the projection $Q$ has the same null space of the projection $P$.
\end{lem}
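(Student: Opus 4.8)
The plan is to reduce the perturbed system \eqref{A+B} to a family of fixed-point problems and to track carefully how the dichotomy constants deteriorate. First I would recall the classical construction: let $\Phi(t)$ denote the fundamental matrix of \eqref{eq:lineal} with $\Phi(0)=I$, and let $G(t,s)$ be the associated Green kernel, so that $G(t,s)=\Phi(t)P\Phi^{-1}(s)$ for $t\ge s$ and $G(t,s)=-\Phi(t)(I-P)\Phi^{-1}(s)$ for $t<s$, with $\|G(t,s)\|\le K e^{-\alpha|t-s|}$. Given the fundamental matrix $\Psi(t)$ of \eqref{A+B} with $\Psi(0)=I$, I would use variation of parameters to write, for any fixed $s$, the column evolution $\Psi(t)\Psi^{-1}(s)$ as the unique bounded (on the appropriate half-line) solution of the integral equation
\begin{equation*}
U(t,s)=\Phi(t)\Phi^{-1}(s)+\int_{s}^{t}\Phi(t)\Phi^{-1}(r)B(r)U(r,s)\,dr,
\end{equation*}
and then split the right-hand side using the dichotomy projection $P$ to obtain an integral equation involving $G$ on the whole line. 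This is exactly Coppel's argument and I would follow it.

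\textbf{Key steps.} The second step is to define the candidate projection $Q$ as the one obtained by evaluating at a common time the stable subspace of \eqref{A+B}; concretely one shows that for each $\xi$ the equation has a unique solution bounded on $[0,\infty)$, its initial values form a subspace complementary to the unstable one, and $Q$ is the projection onto it along the unstable subspace, which by construction has the same null space as $P$ (the unstable subspace is stable under small perturbation in the sense of being graph-close). The third step is the quantitative estimate: using $\delta=\sup_t\|B(t)\|$, iterate the integral equation (a Gronwall-type / Neumann series argument) to get $\|\Psi(t)Q\Psi^{-1}(s)\|\le K e^{-\alpha(t-s)}\sum_{k\ge0}(2K\delta/\alpha)^{k}\,(\text{polynomial in }(t-s))^{?}$ — more precisely one shows the bound $\frac{K}{1-K\delta/\alpha}e^{-(\alpha-2K\delta)(t-s)}$ or a variant, then chooses the clean constants $\tfrac{5K^2}{2}$ and exponent $\alpha-2K\delta$ by crude majorization valid on the range $\delta<\alpha/(4K^2)$. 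The symmetric estimate for $I-Q$ on $s>t$ is obtained the same way by reversing time. Finally I would check the two defining identities of an exponential dichotomy for the pair $(\Psi,Q)$ and note that $Q^2=Q$ follows from uniqueness of bounded solutions.

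\textbf{Main obstacle.} The genuinely delicate point is the quantitative bookkeeping in step three: one must show that the integral equation for $U(t,s)$, after splitting with $P$, is a contraction (or has a convergent Neumann series) in the weighted sup-norm $\sup_{t}e^{(\alpha-2K\delta)(t-s)}\|U(t,s)\|$, and that the resulting operator norm is $\le 5K^2/2$. The threshold $\delta<\alpha/(4K^2)$ is precisely what makes the relevant geometric series converge with the stated constant, so the main work is verifying that the crude estimates $\sum_k (1/2)^k = 2$ together with the factor $K^2$ coming from applying the dichotomy twice (once for $\Phi$, once in reconstructing $\Psi^{-1}$) combine to $\le 5K^2/2$ rather than something larger; this is a careful but routine computation once the integral equation is set up, and since it is exactly the content of Coppel's roughness theorem I would simply cite \cite{coppel_book} for the constants and reproduce only the structural outline above.
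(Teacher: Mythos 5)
The paper gives no proof of this lemma---it is quoted as a known result with a pointer to \cite{coppel_book}---so your outline of Coppel's variation-of-parameters and weighted-norm fixed-point argument, deferring to \cite{coppel_book} for the precise constants $5K^{2}/2$ and $\alpha-2K\delta$, matches the paper's treatment. One point to tighten if you were to write out the details: the reason $Q$ can be chosen with the same null space as $P$ is \emph{not} that the perturbed unstable subspace coincides with the original one (being graph-close does not make it equal); rather, one projects onto the \emph{new} stable subspace along the \emph{old} unstable subspace $\ker P$, which remains transverse to it for $\delta$ below the stated threshold.
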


\section{Remotely Almost Periodic Solutions of Certain Perturbed Systems}

Consider the following hypothesis:

\begin{enumerate}[label=(\textbf{H.\arabic*})]

\item \label{H1} $A(t)$ is remotely almost periodic. The system \eqref{eq:lineal} has an $(\alpha,K,P)$-exponential dichotomy, also the Green kernel associated is integrable bi-Remotely Almost Periodic.

\item\label{H2}  Let $f\left(t,x\right)$ Remotely almost periodic in $t$ uniformly with respecto to $x$ in every compact subset of $\bb{R}^{n}$, and satisfies the Lipschitz condition, this is, for all $\left(t,x\right),\left(t,y\right)\in\mathbb{R}\times B\left[0,r\right], r\in\bb{R}_{+}$,
there exists a positive constant $M(r)$ such that 
\[
\left\Vert f\left(t,x\right)-f\left(t,y\right)\right\Vert \leq M\left(r\right)\left\Vert x-y\right\Vert ,\,\, t\in\mathbb{R};\,\left\Vert x\right\Vert ,\left\Vert y\right\Vert \leq r.
\]
Also, assume that $M\left(r\right)<\frac{\alpha}{2K}$.

\item\label{H3} Let $f\left(t,x\right)\in\mbox{C}^{\left(2\right)}$
in $x$, and the second order derivative is locally Lipschitz in $x$. Also, we assume that  $\frac{\partial f}{\partial x}\left(t,\xi\left(t\right)\right)$
is remotely almost periodic and $\delta=\sup_{t\in\mathbb{R}}\left\Vert \frac{\partial f}{\partial x}\left(t,\xi\left(t\right)\right)\right\Vert <\frac{\alpha}{4K^{2}}.$ Where $\xi$ is the unique remotely almost periodic solution of \eqref{eq:lineal_nohomo}.

\item \label{H4} Let $g_{\nu}\left(t,x\right)=g\left(t,x,\nu\right)$
be remotely almost periodic in $t$ uniformly for $(x,\nu)\in B_r(0)\times [0,\nu_0]$, and for each small real fixed parameter $\nu$ is uniformly bounded with respect to $x$. And also it satisfies the locally Lipchitz condition
%, es decir, para todo , existe $M_{1}\left(r,\nu_{0}\right)$ tal que 
\[
\left\Vert g_{\nu}\left(t,x\right)-g_{\nu}\left(t,y\right)\right\Vert \leq M_{1}\left(r,\nu\right)\left\Vert x-y\right\Vert.
\]
where $\left(t,x\right),\left(t,y\right)\in\mathbb{R}\times B\left[0,r\right]\mbox{ y }\nu\in\left[0,\nu_{0}\right]$, such that
 $\displaystyle{\Vert g_{\nu}\Vert_{r}=\sup_{t\in\bb{R}, \Vert x\Vert\leq r } \Vert g_{\nu}(t,x)\Vert \rightarrow 0}$ and $M_{1}\left(r,\nu \right)\rightarrow 0$   when $\nu\rightarrow 0$ for every $r>0$ fixed.
\end{enumerate}

Let us consider the following theorem:

\begin{thm} \label{teorema 8}
If hypothesis \ref{H1}--\ref{H4} are satisfied then there exists a constant $r$ and $\nu_{0}=\nu_{0}\left(r\right)$ small enough such that the system \eqref{eq:perturbado} has a unique remotely almost periodic solution $\psi_{\nu}\left(t\right)$ in an $r$-neighborhood of $\xi\left(t\right)$, where $\xi$ is the unique remotely almost periodic solution of \eqref{eq:lineal_nohomo} for all $\nu\in\left[0,\nu_{0}\right]$.
Also, if $g_{\nu}\left(t,x\right)$ is uniformly continuous for
$\left(t,x\right)\in\mathbb{R}\times B\left[0,r\right]\,$ and $\nu\in\left[0,\nu_{0}\right]$, then $\psi_{\nu}\left(t\right)$ is continuous in $\nu$ we have $\lim_{\nu\rightarrow0}\psi_{\nu}\left(t\right)=\xi(t)$.
\end{thm}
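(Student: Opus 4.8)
\emph{Strategy and reduction.} The plan is to regard \eqref{eq:perturbado} as a small perturbation of the linear system obtained by linearising \eqref{eq:lineal_nohomo} along its remotely almost periodic solution $\xi$, and to solve the resulting integral equation in the Banach space $RAP(\mathbb{R}:\mathbb{C}^{n})$ via the Banach fixed point theorem. Set $x=\xi+u$; then $u$ satisfies
\[
u'=A(t)u+\bigl[f(t,\xi(t)+u)-f(t,\xi(t))\bigr]+g_{\nu}(t,\xi(t)+u).
\]
Put $B(t):=\frac{\partial f}{\partial x}(t,\xi(t))$ and $h(t,u):=f(t,\xi(t)+u)-f(t,\xi(t))-B(t)u$, so that $h(t,0)=0$, $\partial_{u}h(t,0)=0$, and the equation reads $u'=(A(t)+B(t))u+h(t,u)+g_{\nu}(t,\xi(t)+u)$. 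By \ref{H3}, $B(\cdot)$ is remotely almost periodic with $\|B\|_{\infty}=\delta<\frac{\alpha}{4K^{2}}$, so Lemma \ref{perturbacion_mantiene_dicotomia} yields an $(\tilde\alpha,\tilde K,Q)$-exponential dichotomy for $u'=(A(t)+B(t))u$, with $\tilde\alpha=\alpha-2K\delta>0$ and $\tilde K=\frac{5K^{2}}{2}$; moreover, since $B$ is remotely almost periodic, the Green kernel $\widetilde G(t,s)$ of this linearised equation is again integrable bi-remotely almost periodic, by the Green-kernel results established earlier in the paper (the roughness estimate preserving the integro-bi-remote-almost-periodic structure). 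By the variation of constants formula for systems with an exponential dichotomy, a bounded continuous $u$ solves the equation above if and only if it is a fixed point of
\[
(\mathcal{T}_{\nu}u)(t):=\int_{\mathbb{R}}\widetilde G(t,s)\bigl[h(s,u(s))+g_{\nu}(s,\xi(s)+u(s))\bigr]\,ds .
\]

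\emph{Contraction.} Fix $r>0$, put $r':=\|\xi\|_{\infty}+r$ and $\mathcal{B}_{r}:=\{u\in RAP(\mathbb{R}:\mathbb{C}^{n}):\|u\|_{\infty}\le r\}$. For $u\in\mathcal{B}_{r}$ we have $\xi+u\in RAP(\mathbb{R}:\mathbb{C}^{n})$, and by \ref{H2}, \ref{H4} and the substitution lemma for remotely almost periodic functions the map $s\mapsto h(s,u(s))+g_{\nu}(s,\xi(s)+u(s))$ is bounded, uniformly continuous and remotely almost periodic; since $\widetilde G$ is integrable bi-remotely almost periodic, $\mathcal{T}_{\nu}u\in RAP(\mathbb{R}:\mathbb{C}^{n})$. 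From the dichotomy, $\int_{\mathbb{R}}\|\widetilde G(t,s)\|\,ds\le\frac{2\tilde K}{\tilde\alpha}$ for every $t$. Since $f\in C^{(2)}$ with locally Lipschitz second derivative (\ref{H3}), Taylor's formula provides a constant $C(r)$, which stays bounded as $r\downarrow 0$, such that $\|h(t,u)\|\le C(r)\|u\|^{2}$ and $\|h(t,u)-h(t,v)\|\le C(r)r\|u-v\|$ for $u,v\in\mathcal{B}_{r}$. Hence, for $u,v\in\mathcal{B}_{r}$ and $\nu\in[0,\nu_{0}]$,
\[
\|\mathcal{T}_{\nu}u\|_{\infty}\le\frac{2\tilde K}{\tilde\alpha}\bigl(C(r)r^{2}+\|g_{\nu}\|_{r'}\bigr),\qquad
\|\mathcal{T}_{\nu}u-\mathcal{T}_{\nu}v\|_{\infty}\le\frac{2\tilde K}{\tilde\alpha}\bigl(C(r)r+M_{1}(r',\nu)\bigr)\|u-v\|_{\infty}.
\]
First choose $r$ so small that $\frac{2\tilde K}{\tilde\alpha}C(r)r\le\frac14$; then, using that $\|g_{\nu}\|_{r'}\to 0$ and $M_{1}(r',\nu)\to 0$ as $\nu\to 0$ (\ref{H4}), choose $\nu_{0}=\nu_{0}(r)$ so small that $\frac{2\tilde K}{\tilde\alpha}\|g_{\nu}\|_{r'}\le\frac r2$ and $\frac{2\tilde K}{\tilde\alpha}M_{1}(r',\nu)\le\frac14$ for all $\nu\in[0,\nu_{0}]$. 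Then $\mathcal{T}_{\nu}$ maps $\mathcal{B}_{r}$ into itself and is a contraction with constant $\le\frac12$, so by the Banach fixed point theorem it has a unique fixed point $u_{\nu}\in\mathcal{B}_{r}$; consequently $\psi_{\nu}:=\xi+u_{\nu}$ is the unique remotely almost periodic solution of \eqref{eq:perturbado} lying in the $r$-neighbourhood of $\xi$, for every $\nu\in[0,\nu_{0}]$.

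\emph{Dependence on $\nu$.} Since $g_{0}\equiv 0$, the operator $\mathcal{T}_{0}$ has $0$ as fixed point, so $u_{0}=0$, i.e. $\psi_{0}=\xi$. For $\nu,\nu'\in[0,\nu_{0}]$, writing $u_{\nu}-u_{\nu'}=\mathcal{T}_{\nu}u_{\nu}-\mathcal{T}_{\nu'}u_{\nu'}$ and using the contraction estimate gives $\|u_{\nu}-u_{\nu'}\|_{\infty}\le 2\|\mathcal{T}_{\nu}u_{\nu'}-\mathcal{T}_{\nu'}u_{\nu'}\|_{\infty}\le\frac{4\tilde K}{\tilde\alpha}\sup_{t\in\mathbb{R}}\|g_{\nu}(t,\xi(t)+u_{\nu'}(t))-g_{\nu'}(t,\xi(t)+u_{\nu'}(t))\|$. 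If $g_{\nu}(t,x)$ is uniformly continuous on $\mathbb{R}\times B[0,r]$ jointly in $\nu\in[0,\nu_{0}]$, the right-hand side tends to $0$ as $\nu'\to\nu$, which gives continuity of $\nu\mapsto\psi_{\nu}$; taking $\nu'=0$ yields $\|\psi_{\nu}-\xi\|_{\infty}=\|u_{\nu}\|_{\infty}\le\frac{4\tilde K}{\tilde\alpha}\|g_{\nu}\|_{r'}\to 0$ as $\nu\to 0$.

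\emph{Main obstacle.} The crux is the assertion that $\widetilde G$ remains integrable and bi-remotely almost periodic after linearisation: Lemma \ref{perturbacion_mantiene_dicotomia} transfers only the dichotomy, and to carry over the bi-remote-almost-periodic structure one must combine it with the fact that $B=\frac{\partial f}{\partial x}(\cdot,\xi(\cdot))$ is remotely almost periodic (\ref{H3}) and with the Green-kernel stability results proved earlier. A subsidiary, routine point is to fix the smallness parameters in the right order---$r$ first, exploiting the quadratic bound on $h$ to absorb the linear term, and only afterwards $\nu_{0}=\nu_{0}(r)$ through \ref{H4}---and to verify that the substitution $s\mapsto F(s,\xi(s)+u(s))$ preserves remote almost periodicity with enough uniformity for $\mathcal{T}_{\nu}$ to land back in $RAP(\mathbb{R}:\mathbb{C}^{n})$.
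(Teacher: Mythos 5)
Your proposal is correct and follows essentially the same route as the paper's proof: center the unknown around $\xi$, linearise $f$ at $\xi$ via Taylor (the paper's $f_{2}$ is your $h$), invoke the roughness lemma together with hypothesis \ref{H3} to get a new dichotomy with an integrable bi-remotely almost periodic Green kernel $\widetilde G$, and then run Banach's fixed point argument on the ball of radius $r$ in $RAP(\mathbb{R}:\mathbb{C}^{n})$, choosing $r$ first (to dominate the quadratic remainder) and $\nu_{0}(r)$ afterwards (to dominate $\|g_{\nu}\|$ and $M_{1}$). Your treatment of continuity in $\nu$ via the fixed-point inequality $\|u_{\nu}-u_{\nu'}\|\le 2\|\mathcal{T}_{\nu}u_{\nu'}-\mathcal{T}_{\nu'}u_{\nu'}\|$ is a slightly cleaner packaging of what the paper does, and you also correctly flag that transferring the integro bi-remote-almost-periodicity of $\widetilde G$ (as opposed to merely the dichotomy) is the one step that Lemma \ref{perturbacion_mantiene_dicotomia} alone does not deliver, which the paper itself handles only by assertion together with a citation to \cite{pinto_remotely}.
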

\begin{proof}
By differential calculus,
\[
f\left(t,y+v\right)-f\left(t,v\right)=\frac{\partial f}{\partial x}\left(t,v\right)+f_{2}\left(t,y,v\right)
\]
where
\begin{equation}
f_{2}\left(t,y,v\right)=\frac{1}{2}\sum_{i,j=1}^{n}y_{j}y_{i}\frac{\partial^{^{2}}f}{\partial x_{i}\partial x_{j}}\left(t,\theta y+v\right),\,\,\,0<\theta<1.\label{eq:taylorsegundoorden}
\end{equation}
Given the hypotheses on $f\left(t,x\right)$ and $g_{\nu}\left(t,x\right)$,
we have that for each $r$ exists $\nu_{0}=\nu_{0}\left(r\right)$,
$N_{i}\left(r\right),\, i=1,2$ and $M_{1}\left(r,\nu_{0}\right)$, such that: 
\begin{align}
\left\Vert \frac{\partial^{2}f}{\partial x_{i}\partial x_{j}}\left(t,\xi\left(t\right)+\theta y\right)\right\Vert \leq & N_{1}\left(r\right),\,\,\, i,j=1,2,\cdots,n\label{eq:segderivadaacotada}\\
\left\Vert \frac{\partial^{2}f}{\partial x_{i}\partial x_{j}}\left(t,\xi\left(t\right)+\theta y\right)-\frac{\partial^{2}f}{\partial x_{i}\partial x_{j}}\left(t,\xi\left(t\right)+\theta\hat{y}\right)\right\Vert  & \leq\theta N_{2}\left(r\right)\left\Vert y-\hat{y}\right\Vert ,\label{eq:Lipschitzsegundaderivada}\\
\left\Vert g_{\nu}\left(t,y\right)-g_{\nu}\left(t,\hat{y}\right)\right\Vert  & \leq M_{1}\left(r,\nu_{0}\right)\left\Vert y-\hat{y}\right\Vert ,\,\, y,\hat{y}\in B\left[0,r\right].
\end{align}
for $t\in\mathbb{R}$, $\left\Vert y\right\Vert ,\left\Vert \hat{y}\right\Vert \leq r$, where $M_{1}\left(r,\nu\right)$ is bounded and $N_{i}\left(r\right),\, i=1,2$, can be chosen as non-decreasing functions on $r$ respectively.

Therefore,
\begin{align}
\left\Vert f_{2}\left(t,y\right)\right\Vert  & \leq\left\Vert \frac{1}{2}\sum_{i,j=1}^{n}y_{j}y_{i}\frac{\partial^{^{2}}f}{\partial x_{i}\partial x_{j}}\left(t,\xi\left(t\right)+\theta y\right)\right\Vert \nonumber \\
 & \leq\frac{1}{2}\sum_{i,j=1}^{n}\left|y_{j}y_{i}\right|N_{1}\left(r\right)\nonumber \\
 & \leq\frac{1}{4}\sum_{i,j=1}^{n}\left|y_{j}^{2}+y_{i}^{2}\right|N_{1}\left(r\right)\nonumber \\
 & \leq nr^{2}N_{1}\left(r\right),\,\,\,\label{eq:cotaf_2}
\end{align}
for $\left\Vert y\right\Vert \leq r$.

Now, let us prove $f_{2}$ is Lipschitz,
\begin{align}
\left\Vert f_{2}\left(t,y\right)-f_{2}\left(t,\hat{y}\right)\right\Vert  & =\left\Vert \frac{1}{2}\sum_{i,j=1}^{n}\left(y_{i}-\hat{y_{i}}\right)y_{j}\frac{\partial^{^{2}}f}{\partial x_{i}\partial x_{j}}\left(t,\theta y+\xi\left(t\right)\right)\right.\nonumber \\
 & \,\,\,+\frac{1}{2}\sum_{i,j=1}^{n}\left(y_{j}-\hat{y_{j}}\right)\hat{y}_{i}\frac{\partial^{^{2}}f}{\partial x_{i}\partial x_{j}}\left(t,\theta\hat{y}+\xi\left(t\right)\right)\nonumber \\
 & \,\,\,\left.+\frac{1}{2}\sum_{i,j=1}^{n}y_{i}\hat{y}_{j}\frac{\partial^{^{2}}}{\partial x_{i}\partial x_{j}}\left[f\left(t,\theta y+\xi\left(t\right)\right)-f\left(t,\theta\hat{y}+\xi\left(t\right)\right)\right]\right\Vert \nonumber \\
 & \leq\frac{1}{2}\left(nrN_{1}\left(r\right)\left\Vert y-\hat{y}\right\Vert +nrN_{1}\left(r\right)\left\Vert y-\hat{y}\right\Vert +r^{2}N_{2}\left(r\right)\theta\left\Vert y-\hat{y}\right\Vert \right)\nonumber \\
 & \leq\left(nrN_{1}\left(r\right)+r^{2}N_{2}\left(r\right)\right)\left\Vert y-\hat{y}\right\Vert ,\label{eq:lipschitzf_2}
\end{align}
for $\left\Vert y\right\Vert ,\left\Vert \hat{y}\right\Vert \leq r$ and $t\in\mathbb{R}$.

Let $u=x-\xi$; we have
\begin{eqnarray}
\frac{du}{dt}\left(t\right) & = & A\left(t\right)u\left(t\right)+f\left(t,u\left(t\right)+\xi\left(t\right)\right)-f\left(t,\xi\left(t\right)\right)+ g_{\nu}\left(t,u\left(t\right)+\xi\left(t\right)\right) \label{traslado_Lipschitz} \\
 & = & A\left(t\right)u\left(t\right)+\frac{\partial f}{\partial x}\left(t,\xi\left(t\right)\right)u\left(t\right)+H_{\nu}\left(t,u\left(t\right)\right),\label{eq:trasladoentornovarphi0}
\end{eqnarray}
where $H_{\nu}:\mathbb{R}\times\mathbb{R}^n\times\mathbb{R}^n\rightarrow\mathbb{R}^n$ given by
\begin{align}
H_{\nu}\left(t,u,\xi\right) & = f_{2}\left(t,u,\xi\right)+ g_{\nu}\left(t,
u+\xi\right)\label{eq:H_v}
\end{align}
By Lemma \ref{perturbacion_mantiene_dicotomia}, and $\left(H_{3}\right)$, we conclude that the linear system 
\begin{equation}
\frac{du}{dt}=A\left(t\right)u+\frac{\partial f}{\partial x}\left(t,\xi\left(t\right)\right)u\label{eq:variacional+A}
\end{equation}
is remotely almost periodic and has an exponential dichotomy, satisfying
\begin{eqnarray*}
\left\Vert \tilde{G}\left(t,s\right)\right\Vert  & \leq & \frac{5K^{2}}{2}e^{-\left(\alpha-2K\delta\right)\left|t-s\right|},
\end{eqnarray*}
where $\tilde{G}$  is the Green's matrix associated with system \eqref{eq:variacional+A}. Thus, it has the form
\[
\tilde{G}\left(t,s\right)=\begin{cases}
Y\left(t\right)QY\left(s\right) & ,t\geq s\\
-Y\left(t\right)\left(I-Q\right)Y\left(s\right) & ,t< s
\end{cases},
\]
and is also bi-remotely almost periodic and integrable.

Let 
\begin{eqnarray*}
\widetilde{B}=\widetilde{B}\left(r,\nu\right) & = & \left\{ \varphi_{\nu}\left(t\right)\left|\varphi_{\nu}\left(t\right)\in\mathcal{C}\left(\mathbb{R},\mathbb{R}^{n}\right), \mbox{is remotely almost periodic in }\right.\right.\\
 &  & \left. t\mbox{ for every }\nu\in\left[0,\nu_{0}\right],\,\left\Vert \varphi_{\nu}\left(t\right)\right\Vert \leq r\right\} 
\end{eqnarray*}
which it is a complete metric subspace of $\mathbb{R}^{n}$ with $\left\Vert \cdot\right\Vert =\left\Vert \cdot\right\Vert _{\infty}$.

Let $\varphi_{\nu}\left(t\right)\in\widetilde{B}$ and consider the following equation
\begin{eqnarray}
u'\left(t\right) =  A\left(t\right)u\left(t\right)+\frac{\partial f}{\partial x}\left(t,\xi\left(t\right)\right)u\left(t\right)+H_{\nu}\left(t,\varphi_{\nu}\left(t\right)\right),\label{eq_after_trans}
\end{eqnarray}
which it has as a solution
\begin{eqnarray}
y(t)=\int_{\mathbb{R}}\tilde{G}\left(t,s\right)H_{\nu}\left(s,\varphi_{\nu}\left(s\right)\right)ds, \label{sol_after_trans}
\end{eqnarray}
Since $\varphi_{\nu}\left(t\right)\in\widetilde{B}$, $\varphi_{\nu}$ is remotely almost periodic, and $H_{\nu}\left(t,u\right)$ is remotely almost periodic in $t$ uniformly with respect to $u$. Thus, $H_{\nu}\left(\cdot,\varphi_{\nu}\left(\cdot\right)\right)$ is remotely almost periodic.

By Theorem 5 from \cite{pinto_remotely} we know that \eqref{sol_after_trans} is the unique remotely almost periodic solution of \eqref{eq_after_trans}.

Define the operator $T$ by
\begin{eqnarray*}
T\varphi_{\nu}\left(t\right) & = & \int_{\mathbb{R}}\tilde{G}\left(t,s\right)H_{\nu}\left(s,\varphi_{\nu}\left(s\right)\right)ds,
\end{eqnarray*}
Recalling \eqref{eq:H_v} and using \eqref{eq:cotaf_2}, we have that
\begin{align}
\left\Vert H_{\nu}\left(t,u\right)\right\Vert  & \leq\left\Vert f_{2}\left(t,u\right)+ g_{\nu}\left(t,u+\xi\left(t\right)\right)\right\Vert \nonumber \\
 & \leq r^{2}nN_{1}\left(r\right)+\left\Vert g_{\nu}\right\Vert _{\tilde{r}},\,\,\left\Vert u\right\Vert \leq r,\, t\in\mathbb{R},\label{eq:cotaHV}
\end{align}
where $\tilde{r}(r)=r+\Vert \xi \Vert_{\infty}$.

Also, we note $H_{\nu}$ is a Lipschitz function with $$L^{*}\left(r,\nu_{0}\right)=\left(nrN_{1}\left(r\right)+nr^{2}N_{2}\left(r\right)+M_{1}\left(\tilde{r},\nu\right)\right)$$
as a Lipschitz constant, this is, 
\begin{align}
\left\Vert H_{\nu}\left(t,u\right)-H_{\nu}\left(t,\hat{u}\right)\right\Vert  
 & \leq L^{*}(r,\nu)\left\Vert u-\hat{u}\right\Vert ,\label{eq:HV_lipschitz}
\end{align}
para $\left\Vert u\right\Vert ,\left\Vert \hat{u}\right\Vert \leq r$
y $t\in\mathbb{R}$.

Since $N_{i}(r),\, i=1,2$ are non-decreasing functions in $r$, we can make $rnN_i(r)$ as small as we want. Then, for a fixed $r$, we can make $\left\Vert g_{\nu}\right\Vert_{\tilde{r}}$ as small as we want, due to the continuity of $g_\nu$ in $\nu$, by choosing a sufficiently small $\nu_1$.
 
Thus, we can choose $r$ and $\nu_{1}\left(r\right)$ sufficiently small such that:

\begin{align}
\left(r^{2}N_{1}\left(r\right)+\left\Vert g_{\nu}\right\Vert_{\tilde{r}}\right) & <\frac{\left(\alpha-2K\delta\right)}{5K^{2}}r.\label{eq:H_v cota}
\end{align}
Then, since $M_1(\tilde{r},\nu) \to 0$ as $\nu \to 0$, for fixed $r$, we have that there exists $\nu_2$ such that

\begin{align}
L^{*}(r,\nu_0) & <\frac{\alpha-2K\delta}{5K^{2}},\label{eq:H_v constantelipschitz cota}
\end{align}
where $K,\,\alpha$, and $\delta$ are defined in the hypotheses, and by choosing $\nu_0=\min\{\nu_1,\nu_2\}$, both inequalities are satisfied.

From \eqref{eq:H_v cota} and \eqref{eq:cotaHV}, we have

\begin{align*}
\left\Vert T\varphi_{\nu}\left(t\right)\right\Vert  & \leq\int_{\mathbb{R}}\left\Vert \tilde{G}\left(t,s\right)\right\Vert \left\Vert H_{\nu}\left(s,\varphi_{\nu}\left(s\right)\right)\right\Vert ds\\
 & =\sup_{s\in\mathbb{R}}\left\Vert H_{\nu}\left(s,\varphi_{\nu}\left(s\right)\right)\right\Vert \int_{\mathbb{R}}\left\Vert \tilde{G}\left(t,s\right)\right\Vert ds\\
 & \leq\frac{5K^{2}}{\left(\alpha-2K\delta\right)}\frac{\left(\alpha-5K\delta\right)}{5K^{2}}r\\
 &=r.
\end{align*}

It remains to verify the contractivity; for this, we consider $\phi_{\nu},\varphi_{\nu}\in\tilde{B}$. By \eqref{eq:HV_lipschitz}, we have that:

\begin{align*}
\left\Vert T\phi_{\nu}\left(t\right)-T\varphi_{\nu}\left(t\right)\right\Vert 
 & \leq\int_{\mathbb{R}}\left\Vert \tilde{G}\left(t,s\right)\right\Vert \left\Vert H_{\nu}\left(s,\phi_{\nu}\left(s\right)\right)-H_{\nu}\left(s,\varphi_{\nu}\left(s\right)\right)\right\Vert ds\\
 & \leq L^{*}\left(r,\nu_{0}\right)\int_{\mathbb{R}}\left\Vert \tilde{G}\left(t,s\right)\right\Vert \left\Vert \phi_{\nu}\left(s\right)-\varphi_{\nu}\left(s\right)\right\Vert ds\\
 & \leq L^{*}\left(r,\nu_{0}\right)\left\Vert \phi_{\nu}-\varphi_{\nu}\right\Vert_{\infty} \int_{\mathbb{R}}\left\Vert \tilde{G}\left(t,s\right)\right\Vert ds\\
 & \leq\frac{5K^{2}}{\alpha-2K\delta}L^{*}\left(r,\nu_{0}\right) \left\Vert \phi_{\nu}-\varphi_{\nu}\right\Vert_{\infty} ,
\end{align*}
with $L^{*}(r,\nu)$ as the Lipschitz constant of $H_{\nu}$, by \eqref{eq:H_v constantelipschitz cota} we have
\[
\frac{5K^{2}}{\alpha-2K\delta}L^{*}\left(r,\nu_{0}\right)<\frac{5K^{2}}{\alpha-2K\delta}\cdot\frac{\alpha-2K\delta}{5K^{2}}=1.
\]
Thus, we obtain that $T:\tilde{B}\rightarrow\tilde{B}$ is a contractive operator. Then, by Banach's fixed-point theorem, there exists a unique fixed point $\varphi_{\nu}\in\tilde{B}$ such that $T\varphi_{\nu}=\varphi_{\nu}$. 

Here, $\varphi_{\nu}=x-\xi$ is the unique remotely almost-periodic solution of \eqref{eq:trasladoentornovarphi0}. Consequently, $\psi_{\nu}=\varphi_{\nu}+\xi$ is a solution of \eqref{eq:perturbado}, and it also satisfies $\left\Vert \psi_{\nu}-\xi\right\Vert \leq r$.

Finally, we will show that $\lim_{\nu\rightarrow0}\psi_{\nu}\left(t\right)=\xi(t)$. Since $\varphi_\nu$ is a solution of \eqref{traslado_Lipschitz}, we have that

\begin{eqnarray*}
\frac{d\varphi_{\nu}}{dt}&=&A\left(t\right)\varphi_{\nu}+f\left(t,\varphi_{\nu}(t)+\xi\left(t\right)\right)-f\left(t,\xi\left(t\right)\right)+g_{\nu}\left(t,\varphi_{\nu}(t)+\xi\left(t\right)\right),
\end{eqnarray*}
so that $\varphi_{\nu}$ can be expressed as
\begin{eqnarray*}
\varphi_{\nu}(t)&=&\int_{\mathbb{R}}G(t,s)\left(f\left(s,\varphi_{\nu}(s)+\xi\left(s\right)\right)-f\left(s,\xi\left(s\right)\right)+g_{\nu}\left(s,\varphi_{\nu}(s)+\xi\left(s\right)\right)\right)ds.
\end{eqnarray*}
From the above, we obtain
\begin{eqnarray*}
\left\Vert \varphi_{\nu}\right\Vert _{\infty}&\leq&\left(1-\frac{2KM(r)}{\alpha}\right)^{-1}\frac{2K}{\alpha}\left\Vert g_{\nu}\left(s,\varphi_{\nu}(s)+\xi\left(s\right)\right)\right\Vert _{\tilde{r}}.
\end{eqnarray*}
Recalling that $\varphi_\nu=\psi_{\nu}-\xi$, we have that

 $$\lim_{\nu\rightarrow0}\Vert \psi_{\nu}-\xi\Vert=0.$$
It follows that
 $$\lim_{\nu\rightarrow0} \psi_{\nu}=\xi.$$
\end{proof}
\begin{rem}
Note that the previous theorem is valid for functions $g_\nu(t,x)=\nu g(t,x)$. In this case, we can explicitly choose $\nu_0$.
\end{rem}

Next, we will analyze the more general forced perturbed system:

\begin{equation}
\frac{dx}{dt}=A_{\nu}\left(t\right)x+f\left(t,x\right)+g_{\nu}\left(t,x\right)\label{A_v(t)}
\end{equation}
where $A_{\nu}\left(t\right)=A(t,\nu)$ is an $n \times n$ square matrix, remotely almost-periodic, defined on $\mathbb{R}$, with $\nu\in\left[0,\nu_{0}\right]$ with $f$ and $g$ as in the previous theorem.

%con $A_{\nu}(t)\rightarrow A_{0}\left(t\right)$ uniformemente cuando $\nu\rightarrow0$. 

Additionally, let us consider the systems:

\begin{eqnarray}
\frac{dv}{dt}&=&A_{0}\left(t\right)v\label{A_0}\\
\frac{dz}{dt}&=&A_{0}\left(t\right)z+f\left(t,z\right)\label{A_0+f}
\end{eqnarray}
where $A_{0}\left(t\right)$ is a remotely almost-periodic matrix function defined on $\mathbb{R}$. 

Let us consider the hypothesis:

\begin{enumerate}[label=(\textbf{{H.\arabic*}'})]
\item \label{H1'} The system \eqref{A_0} satisfies an $(\alpha,K,P)$-exponential dichotomy such that the associated Green's kernel is bi-remotely almost-periodic and integrable. Moreover, we have that $A_{\nu} \rightrightarrows A_{0}$ in $\mathbb{R}$ as $\nu\rightarrow0$.
\end{enumerate}

Thus, we obtain the following Corollary
\begin{cor}\label{corolario_aplicar_average}
If \ref{H1'} along with \ref{H2}-\ref{H4} 
%$\left(H_{1}'\right)$, $\left(H_{2}\right)$, $\left(H_{3}\right)$ and $\left(H_{4}\right)$ 
hold, then there exists a constant $r$ and $\nu_{0}=\nu_{0}\left(r\right)$ sufficiently small such that the system \eqref{A_v(t)} has a unique remotely almost-periodic solution $\psi_{\nu}$ in an $r$-neighborhood of $\xi$, where $\xi$ is the unique remotely almost-periodic solution of \eqref{A_0+f}, for all $\nu\in\left[0,\nu_{0}\right]$.  

Moreover, if $g_{\nu}$ is uniformly continuous over $\mathbb{R}\times B\left[0,r\right]$ and $\nu\in\left[0,\nu_{0}\right]$, then $\psi_{\nu}$ is continuous in $\nu$, and we have  $\lim_{\nu\rightarrow0}\psi_{\nu}\left(t\right)=\xi(t)$ for every $t\in\mathbb{R}$.
\end{cor}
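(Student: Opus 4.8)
The plan is to eliminate the $\nu$-dependence of the linear part by absorbing it into the perturbation, and then to apply Theorem~\ref{teorema 8} verbatim. Write
\[
\frac{dx}{dt}=A_{0}(t)x+f(t,x)+\widehat{g}_{\nu}(t,x),\qquad \widehat{g}_{\nu}(t,x):=\bigl(A_{\nu}(t)-A_{0}(t)\bigr)x+g_{\nu}(t,x),
\]
which is precisely of the form \eqref{eq:perturbado} with the fixed matrix $A(t)$ replaced by $A_{0}(t)$ and $g_{\nu}$ replaced by $\widehat{g}_{\nu}$. Thus it suffices to check that the data $(A_{0},f,\widehat{g}_{\nu},\xi)$ satisfy \ref{H1}--\ref{H4}, where $\xi$ is the unique remotely almost periodic solution of \eqref{A_0+f} (the same $\xi$ occurring in \ref{H3}, now read relative to the base system \eqref{A_0}--\eqref{A_0+f}; its existence and uniqueness is part of \ref{H1'}--\ref{H2}), and then invoke that theorem. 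Note that \ref{H1} for $A_{0}$ is exactly \ref{H1'}, and that \ref{H2}, \ref{H3} are unchanged; in particular the dichotomy constants $\alpha,K,P$ and the bound $\delta$ are those of the unperturbed system \eqref{A_0}, so the smallness requirements $M(r)<\alpha/2K$ and $\delta<\alpha/4K^{2}$ refer to the correct constants and no new compatibility issue arises there.

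The core of the argument is verifying \ref{H4} for $\widehat{g}_{\nu}$. Put $E_{\nu}(t):=A_{\nu}(t)-A_{0}(t)$; by \ref{H1'} each $E_{\nu}$ is remotely almost periodic (a difference of remotely almost periodic matrix functions, $RAP$ being a Banach space as recalled after Definition~\ref{rap}) and $\|E_{\nu}\|_{\infty}\to 0$ as $\nu\to 0$. For fixed $x$, $t\mapsto E_{\nu}(t)x$ is remotely almost periodic, uniformly on $B[0,r]$, since $\|E_{\nu}(t+\tau)x-E_{\nu}(t)x\|\le\|E_{\nu}(t+\tau)-E_{\nu}(t)\|\,r$. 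Moreover, given $\epsilon>0$, after shrinking $\nu_{0}$ so that $\|E_{\nu}\|_{\infty}<\epsilon/2r$ for all $\nu\in[0,\nu_{0}]$, every $\tau\in\mathbb{R}$ becomes an $\epsilon$-remote-translation vector of $t\mapsto E_{\nu}(t)x$, simultaneously for all $x\in B[0,r]$ and $\nu\in[0,\nu_{0}]$; intersecting with the (uniformly) relatively dense set of $\epsilon$-remote-translation vectors of $g_{\nu}(\cdot,x)$ furnished by \ref{H4} shows $\widehat{g}_{\nu}$ is remotely almost periodic in $t$ uniformly for $(x,\nu)\in B[0,r]\times[0,\nu_{0}]$. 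The remaining items of \ref{H4} are immediate: on $B[0,r]$ one has $\|E_{\nu}(t)x\|\le r\|E_{\nu}\|_{\infty}$ (uniform boundedness in $x$ for fixed $\nu$), $\|E_{\nu}(t)(x-y)\|\le\|E_{\nu}\|_{\infty}\|x-y\|$ (local Lipschitz continuity, with constant $\widehat{M}_{1}(r,\nu):=\|E_{\nu}\|_{\infty}+M_{1}(r,\nu)$), and
\[
\|\widehat{g}_{\nu}\|_{r}\le r\|E_{\nu}\|_{\infty}+\|g_{\nu}\|_{r}\to 0 \quad\text{and}\quad \widehat{M}_{1}(r,\nu)=\|E_{\nu}\|_{\infty}+M_{1}(r,\nu)\to 0 \ \text{ as } \nu\to 0,
\]
both limits using $A_{\nu}\rightrightarrows A_{0}$.

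Once \ref{H1}--\ref{H4} are verified for the reformulated system, Theorem~\ref{teorema 8} produces $r$ and $\nu_{0}=\nu_{0}(r)$ small such that \eqref{A_v(t)} has a unique remotely almost periodic solution $\psi_{\nu}$ in the $r$-neighborhood of $\xi$ for all $\nu\in[0,\nu_{0}]$. For the final assertion, if $g_{\nu}$ is uniformly continuous on $\mathbb{R}\times B[0,r]$ for $\nu\in[0,\nu_{0}]$, then so is $\widehat{g}_{\nu}$, because $E_{\nu}\in BUC(\mathbb{R})$ and on the bounded set $B[0,r]$ the bilinear term $(t,x)\mapsto E_{\nu}(t)x$ is jointly uniformly continuous; the corresponding part of Theorem~\ref{teorema 8} then gives continuity of $\psi_{\nu}$ in $\nu$, while $\lim_{\nu\to 0}\psi_{\nu}=\xi$ follows already from $\|\widehat{g}_{\nu}\|_{\widetilde{r}}\to 0$ via the last estimate in the proof of that theorem.

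I expect the only genuine subtlety to be the bookkeeping inside \ref{H4}: keeping the uniformity in $\nu$ of the remote almost periodicity of $\widehat{g}_{\nu}$ honest — handled above by the preliminary shrinking of $\nu_{0}$ — and making sure this shrinking is compatible with the threshold $\nu_{0}(r)$ eventually delivered by Theorem~\ref{teorema 8}, which it is, since one takes the minimum of the two. Beyond the roughness Lemma~\ref{perturbacion_mantiene_dicotomia} (already used inside Theorem~\ref{teorema 8}) and Theorem~\ref{teorema 8} itself, no new analytic ingredient is required.
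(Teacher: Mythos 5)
Your proof is correct and essentially follows the paper's route: both absorb the matrix discrepancy $A_{\nu}-A_{0}$ into the nonlinear perturbation and invoke Theorem~\ref{teorema 8}; the paper does this after the shift $y=x-\xi$ by folding $\left[A_{\nu}(t)-A_{0}(t)\right]\left(y+\xi(t)\right)$ into $H_{\nu}$, whereas you do it before the shift by setting $\widehat{g}_{\nu}(t,x)=\left(A_{\nu}(t)-A_{0}(t)\right)x+g_{\nu}(t,x)$, which is the same decomposition after the change of variable. Your version is slightly cleaner as a black-box application of Theorem~\ref{teorema 8}, and you check the remote-almost-periodicity parts of \ref{H4} for $\widehat{g}_{\nu}$ more carefully than the paper does (it only spells out the Lipschitz bound for $H_{\nu}$ and states that the remaining hypotheses hold).
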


\begin{proof}
Let $y\left(t\right)=x\left(t\right)-\xi\left(t\right)$ for every $t\in\mathbb{R}$;
by differential calculus we have
\begin{eqnarray*}
\frac{dy}{dt} & = & A_{0}\left(t\right)y+\left[A_{\nu}\left(t\right)-A_{0}\left(t\right)\right]\left(y+\xi\left(t\right)\right)+f\left(t,y+\xi\left(t\right)\right)-f\left(t,\xi\left(t\right)\right)\\
& &+ g_{\nu}\left(t,y+\xi\left(t\right)\right)\\
 & = & A_{0}\left(t\right)y+\frac{\partial f}{\partial x}\left(t,\xi\left(t\right)\right)y+H_{\nu}\left(t,y\left(t\right)\right),
\end{eqnarray*}
where
\begin{align}
H_{\nu}\left(t,y\left(t\right)\right) & =\left[A_{\nu}\left(t\right)-A_{0}\left(t\right)\right]\left(y+\xi\left(t\right)\right)+f_{2}\left(t,y\right)+ g_{\nu}\left(t,y+\xi\left(t\right)\right)\label{eq:H_v-1}
\end{align}
and $f_{2}$ is given by \eqref{eq:taylorsegundoorden}. Therefore, the previous equation becomes:
\[
\frac{dy}{dt}=A_{0}\left(t\right)y+\frac{\partial f}{\partial x}\left(t,\xi\left(t\right)\right)y+H_{\nu}\left(t,y\left(t\right)\right).
\]
Note that $H_{\nu}$ is a Lipschitz function. Since it is a sum of Lipschitz functions, we have that
\begin{align*}
\left\Vert H_{\nu}\left(t,y\right)-H_{\nu}\left(t,\hat{y}\right)\right\Vert  
& \leq\left\Vert f_{2}\left(t,y\right)-f_{2}\left(t,\hat{y}\right)\right\Vert +\left\Vert \left[A_{\nu}\left(t\right)-A_{0}\left(t\right)\right]\left(y-\hat{y}\right)\right\Vert \\
 & \,\,\,+\left\Vert  g_{\nu}\left(t,y+\xi\left(t\right)\right)- g_{\nu}\left(t,\hat{y}+\xi\left(t\right)\right)\right\Vert \\
% & \,\,\, \\
 & \leq\left(\left\Vert A_{\nu}\left(\cdot \right)-A_{0}\left(\cdot\right)\right\Vert_{\infty} +nrN_{1}\left(r\right)+r^{2}N_{2}\left(r\right)+M_{1}\left(\nu_{0}\right)\right)\left\Vert y-\hat{y}\right\Vert 
\end{align*}
Note that the convergence hypothesis $A_{\nu} \rightrightarrows A_{0}$ as $\nu\rightarrow 0$
allows us to ensure that the Lipschitz constant becomes sufficiently small.

The hypotheses of Theorem \ref{teorema 8} are satisfied; consequently, the corollary follows.

\end{proof}

Consider the hypothesis:

\begin{enumerate}[label=(\textbf{H.4'})]%\arabic*}]
\item\label{H4'} Let $g\left(t,x,z\right)$ be remotely almost-periodic in $t$, uniformly with respect to $\left(x,z\right)\in B_r(0)$. Moreover, it locally satisfies the Lipschitz condition.
\[
\left\Vert g\left(t,x,y\right)-g\left(t,z,v\right)\right\Vert \leq M_{1}\left(r\right)\left[\left\Vert x-z\right\Vert +\left\Vert y-v\right\Vert \right].
\]
where $\left(t,x,y\right),\left(t,z,v\right)\in\mathbb{R}\times B\left[0,r\right]\times B\left[0,r\right]$, for every fixed $r>0$.
\end{enumerate}

\begin{thm}\label{perturbado_retardo_lineal}
Consider the delayed system
\begin{align}
\frac{dy}{dt} & =A\left(t\right)y+h\left(t\right)+\nu g\left(t,y\left(t\right),y\left(t-\alpha\right)\right),\,\,\,\alpha>0\,\mbox{fijo},\label{reetardo}
\end{align}
and $\xi$ is the unique remotely almost periodic solution of 
\begin{equation}
\frac{dz}{dt}=A\left(t\right)z+h\left(t\right), \label{generador}
\end{equation}
such that \ref{H1} $\left(H_{1}\right)$ is satisfied, $h\in RAP(\mathbb{R},\mathbb{R}^n)$, and \ref{H4'} $\left(H_{4}'\right)$ holds.  
Then, for any fixed $r$, there exists $\nu_{0}=\nu_{0}\left(r\right)$ sufficiently small such that the system \eqref{reetardo} has a unique remotely almost-periodic solution $\psi_{\nu}\left(t\right)$ in an $r$-neighborhood of $\xi\left(t\right)$, for all $\nu\in\left[0,\nu_{0}\right]$.

Moreover, $\psi_{\nu}\left(t\right)$ is continuous in $\nu$ such that  
\[
\lim_{\nu\rightarrow0}\psi_{\nu}\left(t\right)=\xi(t), \,\, \forall t \in \mathbb{R}.
\]
\end{thm}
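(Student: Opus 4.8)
The plan is to follow the proof of Theorem~\ref{teorema 8} in the present, simpler setting: the linear part carries no $f$-term, so no second-order Taylor expansion is needed, and the delay is absorbed by exploiting that $RAP(\mathbb{R},\mathbb{R}^n)$ is invariant under translations. First I would put $u=y-\xi$. Since $\xi$ solves \eqref{generador}, $y$ solves \eqref{reetardo} if and only if
\[
\frac{du}{dt}=A(t)u+\nu\,g\bigl(t,u(t)+\xi(t),u(t-\alpha)+\xi(t-\alpha)\bigr),
\]
and by \ref{H1} the homogeneous part $\dot u=A(t)u$ has an $(\alpha,K,P)$-exponential dichotomy whose Green kernel $G(t,s)$ is integrable and bi-remotely almost periodic, with $\sup_{t\in\mathbb{R}}\int_{\mathbb{R}}\|G(t,s)\|\,ds\le 2K/\alpha$. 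Hence the problem reduces to producing a unique fixed point of sup-norm at most $r$ of
\[
(Tu)(t)=\nu\int_{\mathbb{R}}G(t,s)\,g\bigl(s,u(s)+\xi(s),u(s-\alpha)+\xi(s-\alpha)\bigr)\,ds
\]
on the complete metric space $\widetilde{B}=\{u\in RAP(\mathbb{R},\mathbb{R}^n):\|u\|_\infty\le r\}$.

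Next I would check $T(\widetilde{B})\subseteq RAP(\mathbb{R},\mathbb{R}^n)$. The set of $\epsilon$-remote-translation vectors is shift invariant, $T(f(\cdot-\alpha),\epsilon)=T(f,\epsilon)$, so $u(\cdot-\alpha)$ is remotely almost periodic whenever $u$ is; therefore both $u+\xi$ and $u(\cdot-\alpha)+\xi(\cdot-\alpha)$ are remotely almost periodic with values in $B[0,\tilde r]$, where $\tilde r:=r+\|\xi\|_\infty$. Choosing common $\epsilon$-remote-translation vectors for $u$, $\xi$, $u(\cdot-\alpha)$, $\xi(\cdot-\alpha)$ and using the local Lipschitz bound of \ref{H4'} to control the composition error, the superposition $s\mapsto g(s,u(s)+\xi(s),u(s-\alpha)+\xi(s-\alpha))$ is remotely almost periodic and bounded by $C(\tilde r):=\sup_{t\in\mathbb{R},\,\|x\|,\|z\|\le\tilde r}\|g(t,x,z)\|$, which is finite since $g(\cdot,0,0)\in BUC$ and $g$ is Lipschitz on $B[0,\tilde r]\times B[0,\tilde r]$. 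As $G$ is integrable and bi-remotely almost periodic, $Tu$ is then remotely almost periodic; this is precisely the convolution-type statement used after \eqref{sol_after_trans}, i.e.\ Theorem~5 of \cite{pinto_remotely}, which also identifies $Tu$ as the unique remotely almost periodic solution of the linear equation forced by $\nu\,g(t,u(t)+\xi(t),u(t-\alpha)+\xi(t-\alpha))$.

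For the estimates I would use $\sup_{t}\int_{\mathbb{R}}\|G(t,s)\|\,ds\le 2K/\alpha$ throughout. The self-map property follows from $\|Tu\|_\infty\le|\nu|\,\frac{2K}{\alpha}\,C(\tilde r)$, which is $\le r$ as soon as $|\nu|\le\alpha r/(2K\,C(\tilde r))$. For $u,\hat u\in\widetilde{B}$, the Lipschitz condition of \ref{H4'} gives
\[
\|(Tu)(t)-(T\hat u)(t)\|\le|\nu|\,M_1(\tilde r)\cdot 2\|u-\hat u\|_\infty\int_{\mathbb{R}}\|G(t,s)\|\,ds\le|\nu|\,\frac{4KM_1(\tilde r)}{\alpha}\,\|u-\hat u\|_\infty,
\]
so $T$ is a contraction whenever $|\nu|<\alpha/(4KM_1(\tilde r))$. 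Fixing $r$ first and taking $\nu_0=\nu_0(r)$ to be the minimum of these two thresholds makes $T:\widetilde{B}\to\widetilde{B}$ a contraction for every $\nu\in[0,\nu_0]$; Banach's fixed point theorem then yields the unique $u_\nu\in\widetilde{B}$ with $Tu_\nu=u_\nu$, and $\psi_\nu=u_\nu+\xi$ is the desired unique remotely almost periodic solution of \eqref{reetardo} in the $r$-neighborhood of $\xi$.

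Finally, to get $\lim_{\nu\to0}\psi_\nu=\xi$, I would substitute the identity $u_\nu=Tu_\nu$ back in, bound $\|g(s,u_\nu(s)+\xi(s),u_\nu(s-\alpha)+\xi(s-\alpha))\|\le\|g(\cdot,\xi(\cdot),\xi(\cdot-\alpha))\|_\infty+2M_1(\tilde r)\|u_\nu\|_\infty$, and move the $u_\nu$-term to the left to obtain
\[
\|u_\nu\|_\infty\le\Bigl(1-|\nu|\tfrac{4KM_1(\tilde r)}{\alpha}\Bigr)^{-1}|\nu|\,\tfrac{2K}{\alpha}\,\|g(\cdot,\xi(\cdot),\xi(\cdot-\alpha))\|_\infty,
\]
whose right-hand side vanishes as $\nu\to0$; applying the same estimate to $u_\nu-u_{\nu'}$ gives continuity of $\nu\mapsto\psi_\nu$. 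The one genuinely delicate point is the composition lemma for the delayed superposition---that $s\mapsto g(s,u(s)+\xi(s),u(s-\alpha)+\xi(s-\alpha))$ belongs to $RAP(\mathbb{R},\mathbb{R}^n)$, with the shift by $\alpha$ handled through $T(f(\cdot-\alpha),\epsilon)=T(f,\epsilon)$; once it is in place the delay plays no further role and the rest is the routine contraction-mapping argument of Theorem~\ref{teorema 8}.
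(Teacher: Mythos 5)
Your proof follows essentially the same route as the paper's: the reduction $u=y-\xi$, the same integral operator $T\varphi(t)=\nu\int_{\mathbb{R}}G(t,s)\,g(s,\varphi(s)+\xi(s),\varphi(s-\alpha)+\xi(s-\alpha))\,ds$ on the ball $\widetilde{B}$ of $RAP$ functions, the self-map estimate $\nu\,\tfrac{2K}{\alpha}\,\sup\|g\|\le r$, the contraction estimate $\nu\,\tfrac{4K}{\alpha}\,M_1<1$, and Banach's fixed-point theorem with $\nu_0$ the minimum of the two thresholds. You are in fact a bit more careful than the paper in tracking the enlarged radius $\tilde r=r+\|\xi\|_\infty$ (the paper writes $M_1(r)$ and $\|g\|_\infty$ where your $M_1(\tilde r)$ and $C(\tilde r)$ are what is actually needed), and in making explicit both the shift-invariance $T(f(\cdot-\alpha),\epsilon)=T(f,\epsilon)$ underlying the delayed composition lemma and the closing estimate showing $\|u_\nu\|_\infty\to 0$, which the paper only references back to Theorem~\ref{teorema 8}.
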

\begin{proof}
Let us consider $u\left(t\right)=y\left(t\right)-\xi\left(t\right)$, so that
\begin{align*}
\frac{du}{dt} & =A\left(t\right)u+\nu g\left(t,u\left(t\right)+\xi\left(t\right),u\left(t-\alpha\right)+\xi\left(t-\alpha\right)\right),
\end{align*}
For a fixed $r$ let 
\begin{eqnarray*}
\tilde{B}(r)=\left\{ \varphi_{\nu}\in RAP(\mathbb{R},\mathbb{R}^{n})\left|\,\Vert\varphi_{\nu}\Vert\leq r\right.\right\} 
\end{eqnarray*}
Let $\nu_{0}(r)=\min\left\{ \frac{r\alpha}{2K\left\Vert g\right\Vert _{\infty}},\frac{\alpha}{4KM_{1}(r)}\right\}$
and consider
\begin{eqnarray*}
\frac{dv}{dt}=A\left(t\right)v+\nu g\left(t,\varphi_{\nu}\left(t\right)+\xi\left(t\right),\varphi_{\nu}\left(t-\alpha\right)+\xi\left(t-\alpha\right)\right),
\end{eqnarray*}
where we know that the unique solution is given by
\begin{eqnarray*}
v(t)=\nu\int_{-\infty}^{\infty}G(t,s)g\left(s,\varphi_{\nu}\left(s\right)+\xi\left(s\right),\varphi_{\nu}\left(s-\alpha\right)+\xi\left(s-\alpha\right)\right)ds
\end{eqnarray*}
which is remotely almost periodic.

Now we will prove that the following $T:\tilde{B}\rightarrow\tilde{B}$ where
\begin{eqnarray*}
T\varphi_{\nu}(t)=\nu\int_{-\infty}^{\infty}G(t,s)g\left(s,\varphi_{\nu}\left(s\right)+\xi\left(s\right),\varphi_{\nu}\left(s-\alpha\right)+\xi\left(s-\alpha\right)\right)ds.
\end{eqnarray*}
is well defined.

Firstly, we already know $T\varphi_{\nu}\in RAP(\mathbb{R},\mathbb{R}^{n})$. Next we prove $\left\Vert T\varphi_{\nu}(t)\right\Vert \leq r$, we have
\begin{eqnarray*}
\left\Vert T\varphi_{\nu}(t)\right\Vert 	
&=&	\nu\int_{-\infty}^{\infty}\left\Vert G(t,s)\right\Vert \left\Vert g\left(s,\varphi_{\nu}\left(s\right)+\xi\left(s\right)\varphi_{\nu}\left(s-\alpha\right)+\xi\left(s-\alpha\right)\right)\right\Vert ds\\
	&\leq&	\nu2\frac{K}{\alpha}\left\Vert g\right\Vert _{\infty}\leq r.
\end{eqnarray*}
We prove also that T is a contractive operator. Let $\varphi_{\nu,}\psi_{\nu}\in\tilde{B}$, we have
\begin{eqnarray*}
\left\Vert T\varphi_{\nu}(t)-T\psi_{\nu}(t)\right\Vert 	
	&\leq&	\nu\frac{4K}{\alpha}M_{1}(r)\left\Vert \varphi_{\nu}-\psi_{\nu}\right\Vert _{\infty}.
\end{eqnarray*}
Thus, $T$ is contractive, and the theorem concludes in the same way as the theorem \ref{teorema 8}.
\end{proof}

Finally, we will consider the following nonlinear and non-autonomous systems

\begin{align}
\frac{dz}{dt} & =f\left(t,z\right)\label{eq:generador-f}\\
\frac{dy}{dt} & =f\left(t,y\right)+\nu g\left(t,y\left(t\right),y\left(t-\alpha\right)\right),\,\,\,\alpha>0\,\mbox{fijo},\label{perturbadof}
\end{align}
together with the hypothesis $\left(H_{1}\right),\,\left(H_{3}\right)$ and 
\begin{enumerate}[label=(\textbf{H.2''})]
\item\label{H2''} The variational system
\begin{equation}
\frac{dz}{dt}=\frac{\partial f}{\partial x}\left(t,\xi\left(t\right)\right)z\label{eq:varicional}
\end{equation}
is remotely almost periodic, it has an $(\alpha,K,P)$-exponential dichotomy and the associated Green kernel is integrable Bi-remotely almost periodic, where $\xi\left(t\right)$ is the unique remotely almost periodic solution of \eqref{eq:generador-f}.
\end{enumerate}

\begin{thm}
If \ref{H1}, \ref{H2''}, \ref{H3} and \ref{H4'} hold, 
%$\left(H_{1}\right)$, $\left(H_{2}''\right)$, $\left(H_{3}\right)$, and $\left(H_{4}'\right)$ hold,
then there exists a constant $r$ and $\nu_{0}=\nu_{0}\left(r\right)$ sufficiently small such that the system \eqref{perturbadof} has a unique remotely almost-periodic solution $\psi_{\nu}\left(t\right)$ in an $r$-neighborhood of $\xi\left(t\right)$, for all $\nu\in\left[0,\nu_{0}\right]$.  

Moreover, if $g_{\nu}\left(t,x,z\right)$ is uniformly continuous for $\left(t,x,z\right)\in\mathbb{R}\times B\left[0,r\right]\times B\left[0,r\right]$, then $\psi_{\nu}\left(t\right)$ is continuous in $\nu$, and we have  
\[
\lim_{\nu\rightarrow0}\psi_{\nu}\left(t\right)=\xi(t).
\]
\end{thm}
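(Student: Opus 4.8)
The plan is to reproduce the reduction-plus-contraction scheme already used for Theorem \ref{teorema 8} and Theorem \ref{perturbado_retardo_lineal}, now merging the second-order Taylor device supplied by \ref{H3} with the delayed argument handled as in Theorem \ref{perturbado_retardo_lineal}; the solution $\xi$ of \eqref{eq:generador-f} around which everything is centered is the one already postulated in \ref{H2''}. First I would put $u(t)=y(t)-\xi(t)$ and, using the Taylor expansion \eqref{eq:taylorsegundoorden} of $f$ about $\xi(t)$, rewrite \eqref{perturbadof} as
\[
\frac{du}{dt}=\frac{\partial f}{\partial x}\bigl(t,\xi(t)\bigr)u+H_{\nu}\bigl(t,u(t),u(t-\alpha)\bigr),\qquad H_{\nu}(t,u,\widetilde{u})=f_{2}(t,u)+\nu g\bigl(t,u+\xi(t),\widetilde{u}+\xi(t-\alpha)\bigr),
\]
with $f_{2}$ as in \eqref{eq:taylorsegundoorden}, so in particular $f_{2}(t,0)=0$.

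By \ref{H2''} the linear part of this equation is precisely the variational system \eqref{eq:varicional}, which is remotely almost periodic, carries an $(\alpha,K,P)$-exponential dichotomy, and has an integrable bi-remotely almost periodic Green kernel $G(t,s)$ with $\int_{\mathbb{R}}\|G(t,s)\|\,ds\le 2K/\alpha$. Note that, in contrast with Theorem \ref{teorema 8}, the roughness Lemma \ref{perturbacion_mantiene_dicotomia} is not needed here, since the dichotomy of the variational equation is assumed outright; hypothesis \ref{H3} intervenes only through the $C^{2}$ bounds \eqref{eq:cotaf_2} and \eqref{eq:lipschitzf_2}, which give $\|f_{2}(t,u)\|\le nr^{2}N_{1}(r)$ and $\|f_{2}(t,u)-f_{2}(t,\hat{u})\|\le\bigl(nrN_{1}(r)+r^{2}N_{2}(r)\bigr)\|u-\hat{u}\|$ for $\|u\|,\|\hat{u}\|\le r$.

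Now fix $r>0$, set $\widetilde{r}=r+\|\xi\|_{\infty}$, and work on the complete metric space $\widetilde{B}(r)=\{\varphi_{\nu}\in RAP(\mathbb{R},\mathbb{R}^{n}):\|\varphi_{\nu}\|_{\infty}\le r\}$ with the operator
\[
T\varphi_{\nu}(t)=\int_{\mathbb{R}}G(t,s)\,H_{\nu}\bigl(s,\varphi_{\nu}(s),\varphi_{\nu}(s-\alpha)\bigr)\,ds .
\]
The first thing to check is that $T$ is well defined. Since a time translate of a remotely almost periodic function is again remotely almost periodic, $\varphi_{\nu}(\cdot-\alpha)\in RAP$; by \ref{H4'} together with the $C^{2}$ structure from \ref{H3}, the map $s\mapsto H_{\nu}(s,\varphi_{\nu}(s),\varphi_{\nu}(s-\alpha))$ is remotely almost periodic and bounded by $nr^{2}N_{1}(r)+\nu\|g\|_{\widetilde{r}}$; then, exactly as in the earlier theorems, Theorem 5 of \cite{pinto_remotely} guarantees that $T\varphi_{\nu}$ is the unique remotely almost periodic solution of the associated linear inhomogeneous equation, hence $T\varphi_{\nu}\in RAP(\mathbb{R},\mathbb{R}^{n})$. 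For the self-map property, $\|T\varphi_{\nu}(t)\|\le\frac{2K}{\alpha}\bigl(nr^{2}N_{1}(r)+\nu\|g\|_{\widetilde{r}}\bigr)$, so, using $nrN_{1}(r)\to 0$ as $r\to 0^{+}$, I first choose $r$ with $\frac{2K}{\alpha}nr^{2}N_{1}(r)\le r/2$ and then $\nu_{1}(r)$ so small that $\frac{2K}{\alpha}\nu\|g\|_{\widetilde{r}}\le r/2$ for $\nu\le\nu_{1}$, which gives $T(\widetilde{B}(r))\subseteq\widetilde{B}(r)$.

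For contractivity, combining \eqref{eq:lipschitzf_2} with the Lipschitz condition in \ref{H4'},
\[
\|T\phi_{\nu}-T\varphi_{\nu}\|_{\infty}\le\frac{2K}{\alpha}\Bigl(nrN_{1}(r)+r^{2}N_{2}(r)+2\nu M_{1}(\widetilde{r})\Bigr)\|\phi_{\nu}-\varphi_{\nu}\|_{\infty},
\]
so I shrink $r$ further so that $\frac{2K}{\alpha}\bigl(nrN_{1}(r)+r^{2}N_{2}(r)\bigr)<\frac{1}{2}$ and choose $\nu_{2}(r)$ with $\frac{4K}{\alpha}\nu M_{1}(\widetilde{r})<\frac{1}{2}$ for $\nu\le\nu_{2}$; with $\nu_{0}=\min\{\nu_{1},\nu_{2}\}$, $T$ is a contraction of $\widetilde{B}(r)$ for every $\nu\in[0,\nu_{0}]$. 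Banach's fixed point theorem then yields a unique $\varphi_{\nu}\in\widetilde{B}(r)$ with $T\varphi_{\nu}=\varphi_{\nu}$, which is the unique remotely almost periodic solution of the reduced equation in the $r$-ball, so $\psi_{\nu}=\varphi_{\nu}+\xi$ is the claimed unique remotely almost periodic solution of \eqref{perturbadof} in the $r$-neighborhood of $\xi$. For the last assertion I would feed the identity $\varphi_{\nu}=T\varphi_{\nu}$ back in, use $f_{2}(s,0)=0$ and \eqref{eq:lipschitzf_2} to get $\bigl(1-\frac{2K}{\alpha}(nrN_{1}(r)+r^{2}N_{2}(r))\bigr)\|\varphi_{\nu}\|_{\infty}\le\frac{2K}{\alpha}\nu\|g\|_{\widetilde{r}}$, whence $\|\psi_{\nu}-\xi\|_{\infty}=\|\varphi_{\nu}\|_{\infty}\to 0$ as $\nu\to 0$, and under the extra uniform continuity of $g$ the continuity of $\nu\mapsto\psi_{\nu}$ follows exactly as in Theorem \ref{teorema 8}. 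The step I expect to demand the most care is the well-definedness of $T$, i.e.\ keeping $s\mapsto H_{\nu}(s,\varphi_{\nu}(s),\varphi_{\nu}(s-\alpha))$ inside $RAP(\mathbb{R},\mathbb{R}^{n})$: this is where the uniform-in-$(x,z)$ remote almost periodicity of $g$ from \ref{H4'} must interact correctly with both the delay $s-\alpha$ and the $C^{2}$ Taylor remainder $f_{2}$; the remaining contraction estimates are a routine rerun of those already carried out in the paper.
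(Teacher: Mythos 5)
Your argument is correct and follows essentially the same route the paper takes: substitute $u=y-\xi$, Taylor-expand $f$ about $\xi$ so that the linear part becomes the variational system whose dichotomy \ref{H2''} supplies, and run a Banach fixed-point argument on $\widetilde{B}(r)$ against the integrable bi-remotely almost periodic Green kernel, with \ref{H3} entering only through the bounds \eqref{eq:cotaf_2}--\eqref{eq:lipschitzf_2} on the quadratic remainder $f_{2}$. The paper compresses this to the single line ``Applying Theorem \ref{perturbado_retardo_lineal}, the result follows''; since that theorem has no analogue of the $f_{2}$ term, your explicit rerun of the self-map and contraction estimates (with the extra $nr^{2}N_{1}(r)$ and $nrN_{1}(r)+r^{2}N_{2}(r)$ contributions alongside the factor $2\nu M_{1}(\widetilde r)$ coming from the delay) is exactly the content the paper's citation glosses over.
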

\begin{proof}
Consider $u\left(t\right)=y\left(t\right)-\xi\left(t\right)$
\begin{align*}
\frac{du}{dt} & =f\left(t,u+\xi\left(t\right)\right)-f\left(t,u\right)+\nu g\left(t,u\left(t\right)+\xi\left(t\right),u\left(t-\alpha\right)+\xi\left(t-\alpha\right)\right),\\
 & =\frac{\partial f}{\partial x}\left(t,\xi\left(t\right)\right)x+f_{2}\left(t,u\right)+\nu g\left(t,u\left(t\right)+\xi\left(t\right),u\left(t-\alpha\right)+\xi\left(t-\alpha\right)\right)
\end{align*}
where $f_2$ is given by \eqref{eq:taylorsegundoorden}.
Applying Theorem \ref{perturbado_retardo_lineal}, the result follows.
\end{proof}

\section{Application: Averaging Principle for Remotely Almost-Periodic Equations}

For $\nu_0 > 0$, let $f:\mathbb{R}\times W\times [0,\nu_0]\rightarrow \mathbb{R}^n$ be continuous, where $W$ is a compact subset of $\mathbb{R}^n$.  
Denote the function as $f(t,x,\nu)=f_{\nu}(t,x)$ and assume that, for each $\nu\in[0,\nu_0]$, it is uniformly remotely almost-periodic in $t$, and that $\frac{\partial f_{\nu}}{\partial x}$ is continuous in $x\in\mathbb{R}^n$, uniformly in $t\in\mathbb{R}$.

Moreover, $f_{\nu}(t,x)\rightarrow 0$ and $\frac{\partial f_{\nu}}{\partial x}(t,x)\rightarrow 0$ as $\nu\rightarrow 0$, uniformly for $(t,x)\in\mathbb{R}\times W$.
 
Let us consider the system  
\begin{eqnarray}  
\frac{dx}{dt}=\nu f_{\nu}(t,x),\label{averaging_inicial}  
\end{eqnarray}  
for which we will find remotely almost-periodic solutions.

Now, we will explain the basic idea of the averaging method for the system \eqref{averaging_inicial} (see \cite{cheban,krein}).  
In the previous section, the systems had a linear part and an exponential dichotomy. The system \eqref{averaging_inicial} does not have an obvious linear part, which is why we cannot use the previous techniques or results.  
Let us consider the averaged system.

\begin{eqnarray}
\frac{dx}{dt}=\nu f_0(x),\label{promediado}
\end{eqnarray}
where
\begin{eqnarray}
f_0(x)=\lim_{T\rightarrow \infty} \frac{1}{2T}\int_{-T}^{T} f_0(t,x)dt, x\in B_{r}(0). \label{promedio}
\end{eqnarray}
The autonomous system \eqref{promediado} is much simpler than the non-autonomous system \eqref{averaging_inicial}.  
The system \eqref{promediado} may have natural solutions, which are the constant solutions $x(t)=x_0$ where $f_0(x_0)=0$.  
We will attempt to use the solutions of \eqref{promediado} to approximate solutions of \eqref{averaging_inicial}.  
Thus, to connect the systems \eqref{averaging_inicial} and \eqref{promediado}, we will find a change of variable $x=y+\nu U(t,y,\nu)$, which is invertible, remotely almost-periodic, and close to the identity, such that \eqref{averaging_inicial} transforms into
\begin{eqnarray}
\frac{dy}{dt}=\nu f_{0}(y)+g_{\nu}(t,y) \label{after_cambio_variable}
\end{eqnarray}
where $g_0(t,y)=0$ for all $(t,y)\in \mathbb{R}\times B_{r}(0)$.  

If there exists $y_0\in\mathbb{R}^n$ such that $f_0(y_0)=0$, then, with the change of variable $y=y_0+z$, we can see that \eqref{after_cambio_variable} is equivalent to the system:
\begin{eqnarray*}
\frac{dz}{dt}
&=&\nu \frac{\partial f_0}{\partial y}(y_0)z+ \nu \left( f_{0}(y_0+z)-f_0(y_0)-\frac{\partial f_{0}}{\partial y}(y_0) z +g_{\nu}(t,y_0+z)\right)\\
&=& \nu \frac{\partial f_0}{\partial y}(y_0)z+ \nu F_{\nu}(t,z)
\end{eqnarray*}
Since both changes of variables are invertible and remotely almost-periodic, solving this last system yields a remotely almost-periodic solution for \eqref{averaging_inicial}.  
We can see that it is crucial that the change of variable $x=y+\nu U(t,y,\nu)$ satisfies the mentioned properties.  

Thus, we have the following definition and lemmas:

\begin{defn}
A function $f\in \mathcal{C}(\mathbb{R},\mathbb{R}^n)$ is said to be ergodic if $M(f)$ exists which is given by
\[
M(f):=\lim_{T\rightarrow\infty}\int_{0}^{T}f(t)dt.
\]
\end{defn}
By Proposition 2.4 in \cite{devanei} we know that for every $f\in RAP(\mathbb{R},\mathbb{R}^n)$ then $f$ is ergodic.

\begin{lem}\label{lema_tecnico_desigualdad}
Let $f\in C(\bb{R}\times\Omega,\bb{R})$, we define $F:\bb{R}\times\Omega\times (0,\infty)\rightarrow \bb{C}$ by
\begin{eqnarray}
F(t,x,\nu)=\int_{-\infty}^{t} e^{-\nu(t-s)}f(s,x)ds.\label{F_exp_nu}
\end{eqnarray}
Moreover let
\begin{eqnarray}
h(u,x)&=&\sup_{u\in\bb{R}}\left\vert \frac{1}{u}\int_{0}^{u}f(s-t,x)dt \right\vert \label{h_ux}\\
\xi(x,r)&=&r^{2}\int_{0}^{\infty} h(u,x) ue^{-ru}du.\label{xi_xr}
\end{eqnarray}
Then we have
\begin{enumerate}
\item $\vert F(t,x,\nu)\vert\leq \nu^{-1}\xi(x,\nu)$;

\item  $\vert \frac{\partial F}{\partial t}(t,x,\nu)-f(t,x)\vert\leq \xi(x,\nu)$

\item If $f$ is ergodic, then $F$ is also ergodic, with $M(F)(x)=\nu^{-1}M(f)(x)$, for fixed $\nu>0$.

\item If $f\in RAP(\bb{R}\times\Omega,\bb{R})$ then $F\in RAP(\bb{R}\times\Omega,\bb{R})$ for $\nu>0.$
\end{enumerate}
\end{lem}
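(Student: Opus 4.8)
The plan is to reduce everything to one change of variables together with a Laplace-transform estimate. Writing $u=t-s$ turns the defining integral \eqref{F_exp_nu} into
\[
F(t,x,\nu)=\int_{0}^{\infty}e^{-\nu u}f(t-u,x)\,du .
\]
For the estimate $|F(t,x,\nu)|\le\nu^{-1}\xi(x,\nu)$ I would first introduce the primitive $\Phi(u):=\int_{0}^{u}f(t-\sigma,x)\,d\sigma$, so that $\Phi(0)=0$, $\Phi'(u)=f(t-u,x)$, and $|\Phi(u)|\le u\,h(u,x)$ straight from the definition \eqref{h_ux} of $h$. An integration by parts gives
\[
F(t,x,\nu)=\bigl[e^{-\nu u}\Phi(u)\bigr]_{0}^{\infty}+\nu\int_{0}^{\infty}e^{-\nu u}\Phi(u)\,du=\nu\int_{0}^{\infty}e^{-\nu u}\Phi(u)\,du ,
\]
the boundary term vanishing since $|\Phi(u)|$ grows at most linearly (indeed $|\Phi(u)|\le u\sup_{t}|f(t,x)|$ whenever $f(\cdot,x)$ is bounded). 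Bounding $|\Phi(u)|$ and recognising \eqref{xi_xr},
\[
|F(t,x,\nu)|\le\nu\int_{0}^{\infty}e^{-\nu u}u\,h(u,x)\,du=\nu^{-1}\Bigl(\nu^{2}\int_{0}^{\infty}h(u,x)\,u\,e^{-\nu u}\,du\Bigr)=\nu^{-1}\xi(x,\nu).
\]

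The estimate on $\partial_{t}F-f$ is then immediate: differentiating the defining integral shows that $F$ solves $\partial_{t}F(t,x,\nu)+\nu F(t,x,\nu)=f(t,x)$, so $\partial_{t}F(t,x,\nu)-f(t,x)=-\nu F(t,x,\nu)$ and the previous bound gives $\xi(x,\nu)$. For ergodicity of $F$, Fubini yields
\[
\frac{1}{T}\int_{0}^{T}F(t,x,\nu)\,dt=\int_{0}^{\infty}e^{-\nu u}\Bigl(\frac{1}{T}\int_{0}^{T}f(t-u,x)\,dt\Bigr)du ;
\]
for each fixed $u$ the inner average tends to $M(f)(x)$ as $T\to\infty$, and a uniform-in-$T$ bound on these averages (available, e.g., as soon as $f(\cdot,x)$ is bounded) lets dominated convergence and $\int_{0}^{\infty}e^{-\nu u}\,du=\nu^{-1}$ produce $M(F)(x)=\nu^{-1}M(f)(x)$.

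The substantive part is $F\in RAP(\bb{R}\times\Omega,\bb{R})$. Boundedness, with $\|F(\cdot,\cdot,\nu)\|_{\infty}\le\nu^{-1}\|f\|_{\infty}$, and uniform continuity of $F$ follow at once from the integral representation and the corresponding properties of $f$. For the remote-translation property, fix $\epsilon>0$, keep $\epsilon'>0$ free for now, and let $\tau$ be a common $\epsilon'$-remote-translation vector of $f(\cdot,x)$ for $x$ in the compact set under consideration; such $\tau$ form a relatively dense set. Then
\[
F(t+\tau,x,\nu)-F(t,x,\nu)=\int_{0}^{\infty}e^{-\nu u}\bigl[f(t+\tau-u,x)-f(t-u,x)\bigr]\,du ,
\]
and I would split the integral at a radius $R$: the tail is at most $2\nu^{-1}\|f\|_{\infty}e^{-\nu R}<\epsilon/2$ for $R$ large, while on $[0,R]$, because $|t-u|\ge|t|-R\to\infty$ uniformly for $u\in[0,R]$, the defining property of $\tau$ gives
\[
\limsup_{|t|\to\infty}\ \sup_{0\le u\le R}\bigl|f(t-u+\tau,x)-f(t-u,x)\bigr|\le\epsilon' ,
\]
so the $\limsup$ of the $[0,R]$-contribution is at most $\epsilon'/\nu$. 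Choosing $\epsilon'=\nu\epsilon/4$ gives $\limsup_{|t|\to\infty}\|F(t+\tau,\cdot,\nu)-F(t,\cdot,\nu)\|<\epsilon$ on the compact set, i.e. $T(f,\epsilon')\subseteq T(F,\epsilon)$; hence $T(F,\epsilon)$ is relatively dense and $F\in RAP(\bb{R}\times\Omega,\bb{R})$. The hard part will be exactly this interchange of $\limsup_{|t|\to\infty}$ with the $u$-integration — which the tail cut-off together with the uniform escape $t-u\to\pm\infty$ on $[0,R]$ makes rigorous — and the same splitting device also yields joint continuity and the uniformity in $x$.
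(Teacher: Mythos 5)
Your arguments for parts (1)--(3) coincide with the paper's proof step for step: you introduce the primitive $\Phi(u)=\int_0^u f(t-\sigma,x)\,d\sigma$ (the paper calls it $\tilde f_t$), integrate by parts to get $F=\nu\int_0^\infty e^{-\nu u}\Phi\,du$, bound by $u\,h(u,x)$ to recover $\nu^{-1}\xi(x,\nu)$; you read off (2) from the identity $\partial_t F+\nu F=f$; and for (3) you Fubini to pull the ergodic average inside against $e^{-\nu u}$ and use $\int_0^\infty e^{-\nu u}\,du=\nu^{-1}$, exactly as the paper's computation of $I(x)$.

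Where you go beyond the paper is part (4): the paper's proof simply stops after (3) and never argues that $F$ is remotely almost periodic. Your argument here is correct and supplies the missing content. The key points check out: the tail $\int_R^\infty$ is controlled by $2\nu^{-1}\lVert f\rVert_\infty e^{-\nu R}$ since $RAP\subset BUC$; on $[0,R]$ the estimate $\limsup_{|t|\to\infty}\sup_{0\le u\le R}\lvert f(t-u+\tau,x)-f(t-u,x)\rvert\le\epsilon'$ is legitimate because for $|t|>T_0+R$ every $t-u$ with $u\in[0,R]$ has $|t-u|>T_0$, so the $\limsup$ in the definition of the remote-translation vector applies uniformly over the compact window; and the relative density of common $\epsilon'$-remote-translation vectors for $f(\cdot,x)$, $x$ in a compact set, is the cited Proposition 2.3 of Yang--Zhang. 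The choice $\epsilon'=\nu\epsilon/4$ then gives $T(f,\epsilon')\subset T(F,\epsilon)$. One small caveat worth stating explicitly if you were to write this up: in (3) you appeal to dominated convergence, so you should note the uniform-in-$T$ bound $\bigl|\frac{1}{2T}\int_{-T}^T f(t-u,x)\,dt\bigr|\le\lVert f\rVert_\infty$, which makes $e^{-\nu u}\lVert f\rVert_\infty$ a valid dominating function; the paper leaves this implicit as well.
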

\begin{proof}
For any $t\in \bb{R}$, let
\begin{eqnarray*}
\tilde{f}_{t}(u)&=&\int_{0}^{u}f(t-r,x)dr.
\end{eqnarray*}
We have from \eqref{h_ux}
\begin{eqnarray*}
\vert \tilde{f}_{t}(u,x)\vert\leq h(u,x) u.
\end{eqnarray*}
In \eqref{F_exp_nu}, let $u=t-s$ then
\begin{eqnarray}
F(t,x,\nu)=\int_{0}^{\infty} e^{-\nu u}f(t-u,x)du.\label{F_exp_nu_cambio_variable}
\end{eqnarray}
Integrating by parts, we obtain
\begin{eqnarray*}
F(t,x,\nu)
&=&\int_{0}^{\infty} e^{-\nu u}f(t-u,x)du\\
&=&\left. \tilde{f}_t(u)e^{-\nu u}\right\vert_{0}^{\infty}+\nu	\int_{0}^{\infty} e^{-\nu u}\tilde{f}_{t}(u,x)du\\
&=&\nu \int_{0}^{\infty} e^{-\nu u}\tilde{f}_{t}(u,x)du.
\end{eqnarray*}
It follows that
\begin{eqnarray*}
\vert F(t,x,\nu)\vert
&\leq& \nu^{-1}\left[ \nu^{2}  \int_{0}^{\infty} \vert e^{-\nu u}\tilde{f}_{t}(u,x)\vert du \right]\\
&\leq& \nu^{-1}\left[ \nu^{2}  \int_{0}^{\infty}  e^{-\nu u} u h(u,x) du \right]=\nu^{-1}\xi(x,\nu).
\end{eqnarray*}
We note $F$ satisfies the following differential equation
\begin{eqnarray*}
\frac{\partial F}{\partial t} (t,x,\nu)-f(t)=-\nu F(t,x,\nu).
\end{eqnarray*}
Then
\begin{eqnarray*}
\left\vert \frac{\partial F}{\partial t} (t,x,\nu)-f(t) \right\vert\leq \vert \nu F(t,x,\nu)\vert\leq \xi(x,\nu)
\end{eqnarray*}
Therefore, (1) and (2) hold.  

Now, let us consider (3). Since $f$ is ergodic,
\begin{eqnarray*}
\lim_{T\rightarrow \infty } \frac{1}{2T}\int_{-T}^{T} f(t+s-u,x)dt=M(f_x)
\end{eqnarray*}
Consider
\begin{eqnarray*}
I(x)&=&\frac{1}{2T}\int_{-T}^{T} F(t+r,x,\nu)dt-\frac{1}{\nu} M(f_x)\\
&=& \frac{1}{2T}\int_{-T}^{T} \int_{-\infty}^{t+r} e^{-\nu (t+r-s)}f(s,x)ds dt-\frac{1}{\nu} M(f_x).
\end{eqnarray*}
Let $u=t+r-s$ and note that $\int_{0}^{\infty}e^{-\nu u }du=1/\nu$ then
\begin{eqnarray*}
I(x)&=& \frac{1}{2T}\int_{-T}^{T} \int^{\infty}_{0} e^{-\nu u}f(t+r-u,x)du dt-\frac{1}{\nu} M(f_x)\\
&=&\int^{\infty}_{0} e^{-\nu u} \left[\frac{1}{2T}\int_{-T}^{T}  f(t+r-u,x)dt-\frac{1}{\nu} M(f_x)\right]du
\end{eqnarray*}
Thus, $I(x)\rightarrow 0$ as $T\rightarrow \infty$, for fixed $\nu>0$.  
With this, we conclude that $M(F_x)=\nu^{-1}M(f_x)$.  
This completes the proof.
\end{proof}

\begin{lem}\label{lem_transformacion}
If $f_\nu$ satisfies the conditions mentioned in the first paragraph of this section.  
Let $f_0$ be as in \eqref{promedio}. Then, for all $r<r_0$, there exists $\nu_0>0$ and a continuous function $U$ on $\mathbb{R}\times B_{r}(0)\times (0,\infty)$ such that
\begin{enumerate}
\item For each $\nu\in(0,\infty)$, $U\in RAP(\mathbb{R}\times B_{r}(0),\mathbb{R}^n)$, and it is ergodic, that is, its average exists.

\item $\frac{\partial U}{\partial t}$ is continuous on $\mathbb{R}\times \mathbb{R}^n\times (0,\infty)$, and derivatives of arbitrary order with respect to $x\in\mathbb{R}^n$ are continuous for each $\nu\in (0,\infty)$.  
$\frac{\partial U}{\partial t}$ and these derivatives are remotely almost-periodic and ergodic.

\item Let $G(t,x,\nu)=\frac{\partial U}{\partial t}(t,x,\nu)-f_{\nu}(t,x)+f_{0}(x)$.  
Then all the functions $\nu U,\ \nu \frac{\partial U}{\partial x},\ G$, and $\frac{\partial G}{\partial x}$ tend to zero as $\nu\rightarrow 0$, uniformly on $\mathbb{R}\times B_{r}(0)$.

\item The change of variable  
\begin{equation}
x=y+\nu U(t,y,\nu) \quad \text{with } (t,y,\nu)\in\mathbb{R}\times B_{r}(0)\times [0,\nu_0] \label{cambio_variable}
\end{equation}  
is invertible and transforms \eqref{averaging_inicial} into  
\begin{eqnarray}
\frac{\partial y}{\partial t}=\nu f_{0}(y)+\nu F_{\nu}(t,y)\label{reducida}
\end{eqnarray}  
where $F_{\nu}$ satisfies the same properties as $f_{\nu}$ on $\mathbb{R}\times B_{r}(0)\times[0,\nu_{0}]$, and additionally, $F_{0}(t,y)=0$ for all $(t,y)\in\mathbb{R}\times B_{r}(0)$.
\end{enumerate}
\end{lem}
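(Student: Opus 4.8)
The plan is to carry out the classical change-of-variables construction of the averaging method, with Lemma~\ref{lema_tecnico_desigualdad} supplying the near-identity transformation $U$. First I would determine what equation $U$ must satisfy. Differentiating $x=y+\nu U(t,y,\nu)$ along a solution of \eqref{averaging_inicial} gives
\[
\dot x=\Bigl(I+\nu\tfrac{\partial U}{\partial y}\Bigr)\dot y+\nu\tfrac{\partial U}{\partial t},
\]
so \eqref{averaging_inicial} is equivalent to $\bigl(I+\nu\frac{\partial U}{\partial y}\bigr)\dot y=\nu f_\nu(t,y+\nu U)-\nu\frac{\partial U}{\partial t}$. In order that the right-hand side collapse, to leading order in $\nu$, onto $\nu f_0(y)$, one is led to ask that $\frac{\partial U}{\partial t}(t,y,\nu)$ agree with $f_\nu(t,y)-f_0(y)$ up to an error that vanishes as $\nu\to0$; this is precisely the kind of object Lemma~\ref{lema_tecnico_desigualdad} produces.

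Accordingly I would define, componentwise,
\[
U(t,y,\nu):=\int_{-\infty}^{t}e^{-\nu(t-s)}\bigl(f_\nu(s,y)-f_0(y)\bigr)\,ds ,
\]
i.e. the function $F$ of Lemma~\ref{lema_tecnico_desigualdad} applied to $f_\nu-f_0$. Since each $f_\nu(\cdot,y)$ is remotely almost periodic uniformly in $y$ and $f_0$ from \eqref{promedio} is continuous, $f_\nu-f_0\in RAP(\mathbb{R}\times B_r(0),\mathbb{R}^n)$ and, by Proposition~2.4 of \cite{devanei}, is ergodic. Parts (3)--(4) of Lemma~\ref{lema_tecnico_desigualdad} then give that $U$ is remotely almost periodic in $t$ uniformly in $y$, ergodic for each fixed $\nu>0$, with $M(U)(y)=\nu^{-1}M\bigl((f_\nu-f_0)(\cdot,y)\bigr)$, and continuity of $U$ and of $\frac{\partial U}{\partial t}=(f_\nu-f_0)-\nu U$ is immediate; differentiating under the integral sign and applying the lemma again to $\partial_x f_\nu-\partial_x f_0$ shows that $\frac{\partial U}{\partial x}$ has the same regularity and $RAP$/ergodicity. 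This proves items (1)--(2). For item (3), write $G:=\frac{\partial U}{\partial t}-f_\nu+f_0=-\nu U$; parts (1)--(2) of Lemma~\ref{lema_tecnico_desigualdad} bound $\|\nu U\|$ and $\|G\|$ by $\xi(y,\nu)$, and $\sup_{y\in B_r(0)}\xi(y,\nu)\to0$ as $\nu\to0$, so $\nu U,\,G\to0$ uniformly on $\mathbb{R}\times B_r(0)$; the analogous statement for $\nu\frac{\partial U}{\partial x}$ and $\partial_x G=-\nu\frac{\partial U}{\partial x}$ follows from the same estimate applied to $\partial_x f_\nu-\partial_x f_0$, using the hypothesis $\partial_x f_\nu\to0$ and the boundedness of $\partial_x f_0$ on $B_r(0)$.

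For item (4), fix $r<r_0$ (so that $B_{r}(0)$ and the perturbed points below stay inside $W$) and use the uniform smallness just obtained to choose $\nu_0>0$ with $\sup_{\mathbb{R}\times B_r(0)\times[0,\nu_0]}\|\nu\frac{\partial U}{\partial y}\|\le\tfrac12$ and $\sup\|\nu U\|$ so small that $y+\nu U(t,y,\nu)$ (and all intermediate arguments appearing below) remains in $W$. For each fixed $(t,\nu)$ the map $y\mapsto x-\nu U(t,y,\nu)$ is then a $\tfrac12$-contraction, so by the Banach fixed-point theorem \eqref{cambio_variable} has, for $x$ in a slightly smaller ball, a unique solution $y=y(t,x,\nu)$, depending continuously on $(t,x,\nu)$ and remotely almost periodic in $t$ uniformly in $x$ (a fixed point of a uniform contraction inherits the regularity of its data). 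Substituting $\frac{\partial U}{\partial t}=f_\nu(t,y)-f_0(y)-\nu U$ into the identity of the first step yields
\[
\dot y=\nu\Bigl(I+\nu\tfrac{\partial U}{\partial y}\Bigr)^{-1}\Bigl[f_0(y)+\bigl(f_\nu(t,y+\nu U)-f_\nu(t,y)\bigr)+\nu U\Bigr],
\]
so \eqref{reducida} holds with
\[
F_\nu(t,y):=\Bigl(I+\nu\tfrac{\partial U}{\partial y}\Bigr)^{-1}\Bigl[f_0(y)+\bigl(f_\nu(t,y+\nu U)-f_\nu(t,y)\bigr)+\nu U\Bigr]-f_0(y),
\]
and at $\nu=0$ this equals $f_0(y)+0+0-f_0(y)=0$, i.e. $F_0\equiv0$. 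Finally I would check that $F_\nu$ inherits the structural properties of $f_\nu$: it is continuous and remotely almost periodic in $t$ uniformly in $y$, being assembled from $f_\nu$, $U$, $\frac{\partial U}{\partial y}$ and $f_0$ by sums, products, the smooth map $M\mapsto(I+M)^{-1}$ applied to the small matrix $\nu\frac{\partial U}{\partial y}$, and the composition $t\mapsto f_\nu(t,y+\nu U(t,y,\nu))$, which is $RAP$ uniformly in $y$ because $f_\nu$ is uniformly $RAP$ and $t\mapsto y+\nu U$ is $RAP$ uniformly in $y$ with values in a fixed compact subset of $W$; $\partial_y F_\nu$ is continuous by the chain rule; and $F_\nu\to0$, $\partial_y F_\nu\to0$ uniformly on $\mathbb{R}\times B_r(0)$ as $\nu\to0$, since $\|(I+\nu\frac{\partial U}{\partial y})^{-1}-I\|\to0$, $\|f_\nu(t,y+\nu U)-f_\nu(t,y)\|\le\|\partial_x f_\nu\|_\infty\|\nu U\|\to0$ and $\|\nu U\|\to0$.

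The hard part will be keeping every estimate uniform in $y\in B_r(0)$ and in $t\in\mathbb{R}$ at the same time, which is where the mean-ergodic behaviour of remotely almost periodic functions does the real work: concretely, the uniform decay $\sup_{y\in B_r(0)}\xi(y,\nu)\to0$ as $\nu\to0$ used in item (3) rests, through the definitions \eqref{h_ux}--\eqref{xi_xr}, on a uniform mean-ergodic statement for $f_\nu-f_0$ --- that $\tfrac1u\int_0^u\bigl(f_\nu(s-\tau,y)-f_0(y)\bigr)\,d\tau$ converges, as $u\to\infty$, to its mean uniformly in the base point $s$ and in $y\in B_r(0)$ --- after which the substitution $v=\nu u$ and dominated convergence in \eqref{xi_xr} close the argument. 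Everything else --- the contraction argument for the inversion of \eqref{cambio_variable}, the stability of the uniformly-$RAP$ class under the algebraic operations and the composition above, and the $\nu\to0$ limits --- is routine bookkeeping.
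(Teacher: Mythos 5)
Your construction omits a step that is essential for item (2) of the lemma and is in fact the central technical device of the paper's proof: the mollification in $x$.

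You define $U(t,y,\nu)=\int_{-\infty}^t e^{-\nu(t-s)}\bigl(f_\nu(s,y)-f_0(y)\bigr)\,ds$, i.e.\ the kernel $F$ of Lemma~\ref{lema_tecnico_desigualdad} applied directly to $H=f_\nu-f_0$. But the standing hypotheses on $f_\nu$ give only that $\frac{\partial f_\nu}{\partial x}$ is continuous --- one order of $x$-smoothness. Your $U$ therefore inherits at most one continuous $x$-derivative, whereas item (2) of the lemma asserts that $U$ has continuous $x$-derivatives of \emph{arbitrary} order, all of them $RAP$ and ergodic. No amount of ``differentiating under the integral sign and applying the lemma again to $\partial_x f_\nu-\partial_x f_0$'' produces a second $x$-derivative, because $\partial_x^2 f_\nu$ is not assumed to exist. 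So with your definition, the proof of (2) stops after one derivative, and the claim ``This proves items (1)--(2)'' does not go through.

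The paper handles this by first forming $\overline{H}(t,x,\nu)=\int_{-\infty}^{t}e^{-\nu(t-s)}H(s,x)\,ds$ as you do, but then \emph{convolving in $x$} with a compactly supported bump $\Delta_a$ of class $C^{2q-1}$, setting $U(t,x,\nu)=\int_{B_a}\Delta_a(x-y)\,\overline{H}(t,y,\nu)\,dy$. Since $q$ is free, this manufactures as many continuous $x$-derivatives of $U$ as desired without asking anything extra of $f_\nu$. The price is that $G=\frac{\partial U}{\partial t}-H$ is no longer just $-\nu U$; one instead controls $\overline{G}=G+\nu U=\int\Delta_{a(\nu)}(x-y)\bigl[H(t,y)-H(t,x)\bigr]dy$ by the mean value theorem and the boundedness of $\partial_x H$, and one couples the mollification radius to $\nu$ by taking $a=a(\nu)\to0$ with $A(a(\nu))\,\xi(\cdot,\nu)\to0$ so that $\nu U$, $\nu\frac{\partial U}{\partial x}$, $G$ and $\frac{\partial G}{\partial x}$ all vanish uniformly. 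Your identity $G=-\nu U$ and the resulting clean bound by $\xi$ are artifacts of the unmollified $U$; with the mollified $U$ they are replaced by the two-step estimate on $\overline{G}$. Your item (4) argument (Banach fixed point for the near-identity inversion, and the algebraic bookkeeping for $F_\nu$) is essentially the same as the paper's, which uses the inverse function theorem plus compactness of $\Omega_1$ --- that part is a harmless stylistic variant. But without the smoothing step the lemma as stated is not proved.
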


\begin{proof}
Let $H:\bb{R}\times B_{r}(0)\rightarrow \bb{C}^n$ by
$$H(t,x)=f_0 (t,x)-f_0 (x)$$
then $H\in RAP(\bb{R}\times B_{r}(0),\bb{R}^n)$. As $f$ is ergodic, it follows from \eqref{promedio} that
$$\lim_{T\rightarrow \infty}\frac{1}{2T}\int_{-T}^{T} H(t+s,x)dt =0$$
uniformly with respect to $x\in B_{r}(0)$ and $s\in \bb{R}$. Therefore $H$ is ergodic and $\mathcal{M}(H_x)=0$. 
Let us define the functions $h:\bb{R}\times B_{r}(0)\rightarrow \mathbb{R}$ and $\xi:\bb{R}\times B_{r}(0)\rightarrow \mathbb{R}$ given by
\begin{eqnarray*}
h(t,x)&:=&\sup_{s\in\bb{R}}\left\vert \frac{1}{t}\int_{0}^{t}H(s-u,x)du\right\vert \,\, x\in B_{r}(0)\\
\xi(t,x)&:=&t^2\int_{0}^{\infty} e^{-tu}u h(u,x)du
\end{eqnarray*}
for every $(t,x)\in\bb{R}\times B_{r}(0)$. It is straightforward that $\xi(x,t)\rightarrow 0$ when $t\rightarrow 0$ uniformly with respect to $x\in B_{r}(0)$. Thus, we define $\xi(x,0):=0$ for every $x\in B_{r}(0)$.

Consider $\overline{H}:\bb{R}\times B_{r}(0) \times (0,\infty)\rightarrow\bb{C}^n$ the bounded solution of the differential equation
\begin{eqnarray*}
\frac{\partial}{\partial t} \overline{H}(t,x,\nu)-H(t,x)=-\nu \overline{H}(t,x,\nu),
\end{eqnarray*}
given by
\begin{eqnarray*}
\overline{H}(t,x,\nu)=\int_{-\infty}^{t}e^{-\nu (t-s)}H(s,x)ds.
\end{eqnarray*}
By Lemma \ref{lema_tecnico_desigualdad}, we have that
\begin{eqnarray}
\vert \overline{H}(t,x,\nu) \vert\leq \nu^{-1}\xi(\nu,x),\ t\in\bb{R}. \label{desigualdad_overlineH}
\end{eqnarray}
Moreover, $\overline{H} \in RAP(\mathbb{R}\times B_{r}(0),\mathbb{R}^n)$ and it is ergodic with $\mathcal{M}(H_x)=0$ for fixed $\nu\in(0,\infty)$.  
It is easy to see that $\frac{\partial \overline{H}}{\partial t} \in RAP(\mathbb{R}\times B_{r}(0),\mathbb{R}^n)$ and is ergodic for $\nu\in (0,\infty)$.  
By \eqref{desigualdad_overlineH}
\begin{eqnarray*}
\left\vert \frac{\partial}{\partial t} \overline{H}(t,x,\nu)-H(t,x) \right\vert \leq \xi(\nu,x), \ t\in \bb{R}.
\end{eqnarray*}
For fixed $a>0$ and some integer $q\geq 1$, we define $\Delta_a$ in $\mathbb{C}^n$ such that
\begin{eqnarray*}
\Delta_{a}(x)=\begin{cases}
d_{a}(1-a^{-2}\left|x\right|^{2})^{2q} & \mbox{si }\left|x\right|\leq a,\\
0 & \mbox{si }\left|x\right|>a,
\end{cases}
\end{eqnarray*}
where the constant $d_a$ is determined by
\begin{eqnarray*}
\int_{B_{a}} \Delta_a (x)dx=1.
\end{eqnarray*}
Let us define now the function $U:\bb{R}\times\bb{C}^n\times(0,\infty)\rightarrow\bb{C}^n$ by
\begin{eqnarray*}
U(t,x,\nu)=\int_{B_{a}} \Delta_a(x-y)\overline{H}(t,y,\nu)dy.
\end{eqnarray*}
which is a continuous functions, also %theo 1.7
$U \in RAP(\mathbb{R}\times B_{a})$ for $\nu\in(0,\infty)$, and $U$ is ergodic since $\overline{H}$ is. Therefore, condition (1) is satisfied.

To prove (2). The function $\Delta_a(x - y)$ has continuous partial derivatives of order greater than $2q - 1$ with respect to $x$, which are bounded, in norm, by a function $A(a)$ (the integration area), where $A$ is continuous on $(0,\infty)$.  
From \eqref{desigualdad_overlineH}, it follows that the function $U$ has partial derivatives with respect to $x$ of order greater than $2q - 1$, which are bounded by $A(a)\xi(y,\nu)\nu^{-1}$, for $y\in B_r(0)$.  
Since $q$ is an arbitrary integer, the number of derivatives with respect to $x$ can be as large as desired.  
%theo 1.7  
$\frac{\partial U}{\partial t}$ and the derivative are remotely almost-periodic and ergodic for each $\nu\in(0,\infty)$.  
Thus, condition (2) is satisfied.

To prove (3). Let us choose $a=a(\nu)$, a function of $\nu$, such that $a(\nu)\rightarrow 0$ and $A(a(\nu))\xi(y,\nu)\rightarrow 0$ uniformly with respect to $y\in B_r(0)$ as $\nu\rightarrow 0$.  
Then $\nu U\rightarrow 0$ and $\nu \frac{\partial U}{\partial x}\rightarrow 0$ as $\nu \rightarrow 0$, uniformly with respect to $x\in B_r(0)$ and $t\in \mathbb{R}$, since $\nu U$ and $\nu \frac{\partial U}{\partial x}$ are bounded by $A(a(\nu))\xi(y,\nu)$.  
For every number $r<r_0$, we choose $\nu_0$ sufficiently small such that $r+a(\nu)<r_0$ for all $\nu\in(0,\nu_0)$.  
It follows from the definition of $\Delta_a(x)$ that

\begin{eqnarray*}
\int_{B_{r_{0}}} \Delta_{a(\nu)}(x-y)dy=1,\ x\in B_{r}(0),\ \nu\in(0,\nu_0).
\end{eqnarray*}
Note that
\begin{eqnarray*}
G(t,x,\nu)=\frac{\partial U}{\partial t}(t,x,\nu)	-H(t,x).
\end{eqnarray*}
Let
\begin{eqnarray*}
\overline{G}(t,x,\nu)= G(t,x,\nu)+\nu U(t,x,\nu).
\end{eqnarray*}
Since
\begin{eqnarray*}
\frac{\partial U}{\partial t}(t,x,\nu)=\int_{B_{r}(0)}\Delta_{a(\nu)}(x-y)[H(t,y)-\nu\overline{H}(t,y,\nu)]dy,
\end{eqnarray*}
we have 
\begin{eqnarray*}
\overline{G}(t,x,\nu)=\int_{B_{r}(0)}\Delta_{a(\nu)}(x-y)[H(t,y)-H(t,x)]dy.
\end{eqnarray*}
By the mean value theorem,
\begin{eqnarray*}
\Vert\overline{G}(t,x,\nu) \Vert &\leq& \sup_{0\leq\Vert x-y\Vert\leq a(\nu)} \Vert H(t,y)-H(t,x) \Vert \\
&=& \sup_{0\leq\Vert x-y\Vert\leq a(\nu)} \left\Vert \frac{\partial H}{\partial x}(t, \theta_{y} (y-x)) \right\Vert \Vert y-x\Vert,
\end{eqnarray*}
where $\theta_y \in (0,1)$. Since $\frac{\partial H}{\partial x}$ is continuous in $x$, uniformly in $t\in\mathbb{R}$, the function $\frac{\partial H}{\partial x}$ is bounded on $\mathbb{R}\times B_{r}(0)$.  
Thus, $\overline{G}(t,x,\nu)\rightarrow 0$ as $\nu\rightarrow 0$, uniformly on $\mathbb{R}\times B_{r}(0)$. Then
\begin{eqnarray*}
G(t,x,\nu)\rightarrow 0 \mbox{  when } \nu\rightarrow 0
\end{eqnarray*}
uniformly in $\bb{R}\times B_{r}(0)$. 
We have
\begin{eqnarray*}
\frac{\partial\overline{G}}{\partial x}(t,x,\nu)=\int_{B_{r_0}} \Delta_{a(\nu)}(x-y)[\frac{\partial H}{\partial x}(t,y)-\frac{\partial H}{\partial x}(t,x)]dy,
\end{eqnarray*}
using the argument employed for $\overline{G}$, it follows that
\begin{eqnarray*}
\frac{\partial\overline{G}}{\partial x}(t,x,\nu)\rightarrow 0 \mbox{  when } \nu\rightarrow 0
\end{eqnarray*}
uniformly with respecto to $(t,x)\in\bb{R}\times B_{r}(0)$. Then,
\begin{eqnarray*}
\frac{\partial G}{\partial x}(t,x,\nu)\rightarrow 0 \mbox{  when } \nu\rightarrow 0
\end{eqnarray*}
uniformly with respect to $(t,x)\in\bb{R}\times B_{r}(0)$. This proves $(3)$.

Since the four functions in $(3)$ converge to zero as $\nu\rightarrow 0$, uniformly with respect to $(t,x)\in\mathbb{R}\times B_{r}(0)$, we can define them as zero for all $(t,x)\in\mathbb{R}\times B_{r}(0)$ when $\nu=0$. Given $\nu_1>0$, let 
\begin{eqnarray*}
\Omega_1 = \{ x\ :\  x=y+\nu U(t,y,\nu),\ (t,y,\nu) \in \bb{R} \times B_{r}(0) \times [0,\nu_1] \}
\end{eqnarray*}
a compact subset of $\bb{C}^n$. Note that $\nu \frac{\partial U }{\partial y}\rightarrow 0$ as $\nu\rightarrow 0$, uniformly with respect to $(t,y)\in\mathbb{R}\times B_{r}(0)$.

Let us choose $\nu_2>0$ such that $I+\nu\frac{\partial U (t,y,\nu)}{\partial y}$ has a bounded inverse for $(t,y,\nu)\in\mathbb{R}\times B_{a(\nu_2)}(0)\times[0,\nu_2]$.  
Then, by the inverse function theorem, the change of variable \eqref{cambio_variable} has at most one solution $y\in B_{a(\nu_2)}(0)$ for each $(x,t,\nu)\in \mathbb{R}\times \Omega_1 \times [0,\nu_2]$.

For any $x_{0}\in\Omega_{1}$ there exists $\nu_{3}(x_{0})>0$ such that the change of variables \eqref{cambio_variable} has a unique solution $y=y(t,x,\nu)$, defined and continuous for $\lvert y-x_{0}\rvert\le \nu_{3}(x_{0})$, $\lvert x-x_{0}\rvert\le \nu_{3}(x_{0})$, and $0\le \nu\le \nu_{3}(x_{0})$.\\
Since $\Omega_{1}$ is compact, we can choose $\nu_{4}>0$, independent of $x_{0}$, that satisfies the same properties as $\nu_{3}(x_{0})$.\\
If $\nu_{0}=\min\{\nu_{1},\nu_{2},\nu_{4}\}$, then the change of variables defines a homeomorphism. The transformation is well defined for $(t,y,\nu)\in\bb{R}\times B_{r}(0)\times[0,\nu_{0}]$.
By \eqref{cambio_variable}, we have
\begin{eqnarray*}
\frac{dx}{dt}=\frac{dy}{dt}+\nu \frac{\partial U}{\partial y}\frac{dy}{dt}+\nu\frac{\partial U}{\partial t}.
\end{eqnarray*}

Then, by \eqref{averaging_inicial} we have
\begin{eqnarray*}
\left(I+\nu \frac{\partial U}{\partial y}\right)\frac{dy}{dt}
&=&\frac{dx}{dt}-\nu\frac{\partial U}{\partial t}\\
&=& \nu f_{\nu}(t,y+\nu U(t,y,\nu))-\nu \frac{\partial U}{\partial t}\\
&=& \nu f_{0}(y)+ \nu \left[ f_{0}(t,y)-f_0(y)- \frac{\partial U}{\partial t}\right]\\
& & +\nu \left[ f(t,y+\nu U(t,y,\nu))-f_{0}(t,y)\right]\\
&=& \nu f_0(y)+\nu \tilde{f}_{\nu}(t,y)
\end{eqnarray*}
We can see that $\tilde{f}$ has the same properties as $f$ in $\mathbb{R}\times B_{r}(0)\times[0,\nu_0]$, and moreover, $f_0(t,y)=0$ for all $(t,y)\in\mathbb{R}\times B_{r}(0)$.  
Again, by point (3), we have that $\nu \frac{\partial U}{\partial x}\rightarrow 0$ as $\nu\rightarrow 0$ uniformly on $(t,x)\in \mathbb{R}\times B_{r}(0)$.  
We can choose $\nu_0$ sufficiently small such that

\begin{eqnarray*}
\left\Vert \nu \frac{\partial U}{\partial y}\right\Vert\leq \delta <1,\ \nu\in[0,\nu_0].
\end{eqnarray*}
Then,
\begin{eqnarray*}
\left[I-\left(-\nu \frac{\partial U}{\partial y} \right) \right]^{-1}=\sum_{k=0}^{\infty}\left( -\nu \frac{\partial U}{\partial y}  \right)^k.
\end{eqnarray*}
Note that $\frac{\partial U}{\partial y}$ is remotely almost-periodic, and that $[I - (-\nu \frac{\partial U}{\partial y})]^{-1}$ is also remotely almost-periodic.  
It follows that
\begin{eqnarray*}
\frac{dy}{dt}
&=& \left[I-\left(-\nu \frac{\partial U}{\partial y} \right) \right]^{-1} (\nu f_0(y)+\nu \tilde{f}_{\nu}(t,y))\\
&=& \left(I+\sum_{k=1}^{\infty}\left( -\nu \frac{\partial U}{\partial y}  \right)^k \right)(\nu f_0(y)+\nu \tilde{f}_{\nu}(t,y))\\
&=&\nu f_0(y)+\nu \tilde{\tilde{f}}_{\nu}(t,y)
\end{eqnarray*}
We can see that $\tilde{\tilde{f}}_\nu$ has the same properties as $f$, that is, it is remotely almost-periodic and $\tilde{\tilde{f}}_\nu \rightarrow 0$.

This completes the proof.
\end{proof}

By applying Lemma \ref{lem_transformacion}, we obtain the following Theorem:

\begin{thm}\label{teo_averaging}
Suppose that $f_\nu$ satisfies the conditions mentioned in the first paragraph of the subsection.  
If $f_0$, defined in \eqref{promedio}, is such that there exists $x_0 \in B_{r}(0)$ with $f_0(x_0)=0$ and $\frac{\partial f}{\partial x}(x_0)$ has eigenvalues with nonzero real part.  
Then there exist $r_0$ and $\nu_0>0$ sufficiently small such that, for $\nu\in(0,\nu_0]$, the equation \eqref{averaging_inicial} has a unique remotely almost-periodic solution $\varphi_\nu$, continuous on $\mathbb{R}\times(0,\nu_0]$, such that
\begin{eqnarray}
\sup_{t\in \mathbb{R}}\vert \varphi_\nu(t) -x_0 \vert \leq r_0. \label{acotacion}
\end{eqnarray}
\end{thm}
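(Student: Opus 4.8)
The plan is to reduce the statement to the results already proved by chaining the two changes of variable described in the averaging discussion. First I would apply Lemma~\ref{lem_transformacion}: for a fixed target radius $r<r_0$ there exist $\nu_0>0$ and a continuous, remotely-almost-periodic, near-identity change of variable $x=y+\nu U(t,y,\nu)$ transforming \eqref{averaging_inicial} into \eqref{reducida}, namely $y'=\nu f_0(y)+\nu F_\nu(t,y)$, where $F_\nu$ enjoys the same regularity and decay properties as $f_\nu$ (in particular $F_0\equiv 0$ on $\mathbb{R}\times B_r(0)$, and $F_\nu,\partial F_\nu/\partial x\to 0$ uniformly as $\nu\to 0$). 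Since the change of variable is a homeomorphism for $\nu\in[0,\nu_0]$ and maps remotely almost periodic functions to remotely almost periodic functions, it suffices to produce a unique remotely almost periodic solution of \eqref{reducida} in a suitable neighbourhood of the constant $y_0:=x_0$, and then pull it back.

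Next I would translate by $y=x_0+z$, exactly as in the excerpt, rewriting \eqref{reducida} in the form
\begin{equation*}
\frac{dz}{dt}=\nu\,\frac{\partial f_0}{\partial y}(x_0)\,z+\nu\Bigl(f_0(x_0+z)-f_0(x_0)-\frac{\partial f_0}{\partial y}(x_0)z+F_\nu(t,x_0+z)\Bigr).
\end{equation*}
The linear part here is the autonomous (hence trivially remotely almost periodic) system $z'=\nu\,\partial_y f_0(x_0)\,z$. Because $\partial_y f_0(x_0)$ has no eigenvalue on the imaginary axis, for every $\nu>0$ this linear system admits an exponential dichotomy (with projection onto the stable subspace, the dichotomy constants scaling with $\nu$ in the exponent), and its Green kernel, being built from $e^{\nu t\,\partial_y f_0(x_0)}$ times constant projections, is integrable and bi-remotely almost periodic. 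Thus hypothesis \ref{H1}-type data are met for this translated system. The nonlinear remainder splits, just as in the proof of Theorem~\ref{teorema 8}, into a quadratic-in-$z$ Taylor piece coming from $f_0$ (which is $O(|z|^2)$ and locally Lipschitz with small constant on a small ball, using that $f_0\in C^2$ near $x_0$ via the regularity of $f_\nu$) plus the genuinely small perturbation $\nu F_\nu(t,x_0+z)$, which is remotely almost periodic in $t$ uniformly in $z$, satisfies $\|\nu F_\nu\|_{r}\to 0$ and has Lipschitz constant $\to 0$ as $\nu\to 0$ by part (3) of Lemma~\ref{lem_transformacion}. Hence the hypotheses \ref{H2}--\ref{H4} of Theorem~\ref{teorema 8} hold for this system with $A(t)\equiv \nu\,\partial_y f_0(x_0)$, $f\equiv$ the $f_0$-Taylor term, and $g_\nu\equiv \nu F_\nu(\cdot,x_0+\cdot)$; applying Theorem~\ref{teorema 8} yields, for $\nu$ small, a unique remotely almost periodic solution $z_\nu$ with $\|z_\nu\|_\infty\le r$ and $z_\nu\to 0$ as $\nu\to 0$. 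Undoing the two substitutions gives the unique remotely almost periodic solution $\varphi_\nu=x_0+z_\nu+\nu U(t,x_0+z_\nu,\nu)$ of \eqref{averaging_inicial} with $\sup_t|\varphi_\nu(t)-x_0|\le r_0$ for a suitable choice $r_0$ of the neighbourhood radius, and continuity in $\nu$ follows from the continuity statements in Lemma~\ref{lem_transformacion} and Theorem~\ref{teorema 8}.

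The main obstacle, and the step that needs genuine care rather than routine bookkeeping, is verifying that the dichotomy and integrability/bi-remote-almost-periodicity hypotheses survive the rescaling by $\nu$: the smallness thresholds in Theorem~\ref{teorema 8} (e.g. $M(r)<\alpha/2K$ and $\delta<\alpha/4K^2$) are stated relative to the dichotomy constants $\alpha,K$ of the \emph{linear} part, and here both the dichotomy exponent $\alpha=\alpha(\nu)\sim c\nu$ and the nonlinear Lipschitz constants depend on $\nu$ simultaneously, so one must check that the ratios stay in the admissible range uniformly — concretely, that the Lipschitz constant of the remainder, which is $O(r)+o_{\nu\to0}(1)$, can be made smaller than a fixed multiple of the $\nu$-independent quantity obtained after factoring $\nu$ out of both the Green-kernel bound $\int_{\mathbb R}\|G(t,s)\|\,ds = O(1/\nu)$ and the perturbation. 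Since the $\nu$ cancels between the $O(1/\nu)$ integral of the Green kernel and the overall factor $\nu$ in front of the perturbation in \eqref{reducida}, the fixed-point contraction estimate is in fact $\nu$-free in its leading behaviour, which is exactly what makes the argument close; making this cancellation explicit is the crux of the proof.
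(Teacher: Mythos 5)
Your proposal follows the paper through the first two steps (invoking Lemma~\ref{lem_transformacion} to pass to \eqref{reducida}, then translating by $y=x_0+z$), but then diverges, and the divergence contains a genuine gap. After the translation, you propose to apply Theorem~\ref{teorema 8} with $A(t)\equiv\nu\,\partial_y f_0(x_0)$, $f$ the quadratic Taylor remainder of $f_0$, and $g_\nu\equiv\nu F_\nu$. This does not match the hypotheses of Theorem~\ref{teorema 8}: there, the matrix $A(\cdot)$ and the Lipschitz nonlinearity $f$ are \emph{fixed}, with a fixed dichotomy triple $(\alpha,K,P)$ and a fixed Lipschitz bound $M(r)<\alpha/2K$, while only $g_\nu$ varies with $\nu$. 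In your decomposition the linear part $\nu A$ and the Taylor term both carry an overall factor of $\nu$ and degenerate to zero as $\nu\to0$, so the exponential dichotomy constant $\alpha(\nu)\sim c\nu$ collapses and the quantitative thresholds in \ref{H2}--\ref{H3} are not satisfied by a fixed set of constants. Corollary~\ref{corolario_aplicar_average} does not rescue this either, since it requires the limiting matrix $A_0$ to retain an exponential dichotomy, and here $A_\nu\to 0$.

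You correctly identify the underlying cancellation --- the Green-kernel integral is $O(1/\nu)$ while the perturbation carries an overall $\nu$ --- and even call making it explicit ``the crux of the proof,'' but you do not supply the device that implements it. The paper does: it rescales time by $s=\nu t$, setting $z_\nu(s)=z(s/\nu)$ and $\overline{F}_\nu(s,z)=\tilde F_\nu(s/\nu,z)$, which turns the equation into $z_\nu'=Az_\nu+\overline{F}_\nu(s,z_\nu)$ with a $\nu$-independent constant hyperbolic matrix $A$. The dichotomy constants are then fixed, the Green kernel is integrable with a $\nu$-free bound, $\overline{F}_\nu$ remains remotely almost periodic in $s$ (a dilation by $\nu>0$ preserves relative density of $\epsilon$-remote-translation sets), and the fixed-point argument closes directly without invoking Theorem~\ref{teorema 8} at all. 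In short: your outline has the right structure and the right heuristic, but the step ``apply Theorem~\ref{teorema 8}'' is not available as stated, and the missing ingredient is precisely the time rescaling that renders the dichotomy $\nu$-uniform.
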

\begin{proof}
By Lemma \ref{lem_transformacion}, we can reduce \eqref{averaging_inicial} to \eqref{reducida}.  
Thus, performing the change of variable $y = z + x_0$, we obtain
\begin{eqnarray}
z'(t)=\nu A z+\nu \tilde{F}_{\nu}(t,z)\label{eq_teo}
\end{eqnarray}
where $A=\frac{\partial f_0}{\partial x}(x_0)$ and 
\begin{eqnarray*}
\tilde{F}_{\nu}(t,z)=f_0(z+x_0)-f_0(x_0)-\frac{\partial f_0}{\partial x}(x_0)z+\tilde{\tilde{f}}_{\nu}(t,z+x_0)
\end{eqnarray*}
Thus, we can see that $\tilde{F}_{\nu}$ satisfies the same properties as $f_\nu$.  
Let $s = t\nu$, $z_{\nu}(s) = z(s\nu^{-1})$, and $\overline{F}_{\nu}(s,z_{\nu}(s)) = \tilde{F}_{\nu}(s\nu^{-1},z_{\nu}(s))$.  
Then, from equation \eqref{eq_teo}, it follows that
\begin{eqnarray*}
\frac{d z_{\nu}}{ds}=A z_{\nu}+\overline{F}_{\nu}(s,z_{\nu}(s)),
\end{eqnarray*}
where $\overline{F}$ is a remotely almost-periodic function in the first variable, since $\tilde{F}$ is.  
Since $A$ has no eigenvalues with zero real part, we have that the solution satisfies the integral equation given by 
\begin{eqnarray*}
z_{\nu}(s)=\int_{-\infty}^{\infty} G(s,u)\overline{F}_{\nu}(u,z_{\nu}(u))du.
\end{eqnarray*}
Let $$B_{r_{0}}=\{\psi_{\nu}\in RAP(\bb{R},\bb{R}^{n}) \vert \Vert \psi_{\nu} \Vert_{\infty} \leq r_{0} \}.$$
We can see that the operator defined by  
\[
T\psi_{\nu}(s)=\int_{-\infty}^{\infty} G(s,u)\overline{F}_{\nu}(u,\psi_{\nu}(u))\,du
\]  
maps $B_{r_{0}}$ into $B_{r_{0}}$ by choosing $r_0$ and $\nu_0$ sufficiently small.  
Then the proof follows from Banach's fixed-point theorem.
\end{proof}

\end{document}